\newtheorem{theorem}{Theorem}[section]
\newtheorem{lemma}[theorem]{Lemma}
\newtheorem{algorithm}[theorem]{Algorithm}
\newtheorem{proposition}[theorem]{Proposition}
\theoremstyle{definition}
\theoremstyle{remark}
\newtheorem{remark}[theorem]{Remark}
\numberwithin{equation}{section}
\newcommand{\R}{\mathbb R}
\newcommand{\N}{\mathbb N}
\newcommand{\cL}{\mathcal L}
\newcommand{\Lis}{\cL\mathrm{is}}
\newcommand{\identity}{\mathrm{Id}}
\DeclareMathOperator{\ran}{ran}
\DeclareMathOperator{\supp}{supp}
\DeclareMathOperator*{\argmin}{argmin}   
\DeclareMathOperator{\dom}{dom}
\DeclareMathOperator{\divv}{div}
\newcommand{\be}{\begin{equation}}
\newcommand{\ee}{\end{equation}}
\newcommand{\1}{\mathbb 1}
\newcommand{\tria}{{\mathcal T}}
\newcommand{\TT}{\mathcal T}
\newcommand*\patchAmsMathEnvironmentForLineno[1]{%
  \expandafter\let\csname old#1\expandafter\endcsname\csname #1\endcsname
  \expandafter\let\csname oldend#1\expandafter\endcsname\csname end#1\endcsname
  \renewenvironment{#1}%
     {\linenomath\csname old#1\endcsname}%
     {\csname oldend#1\endcsname\endlinenomath}}%
\newcommand*\patchBothAmsMathEnvironmentsForLineno[1]{%
  \patchAmsMathEnvironmentForLineno{#1}%
  \patchAmsMathEnvironmentForLineno{#1*}}%
\title{Further results on a space-time FOSLS formulation of parabolic PDEs}
\date{\today}
\author{Gregor Gantner and Rob Stevenson}
\address{
Korteweg-de Vries (KdV) Institute for Mathematics, University of Amsterdam, P.O. Box 94248, 1090 GE Amsterdam, The Netherlands.
}
\email{g.gantner@uva.nl, r.p.stevenson@uva.nl}
\thanks{The first author has been supported by the Austrian Science Fund (FWF) under grant J4379-N.
The second author has been supported by NSF Grant DMS 172029.}
\subjclass[2010]{35K20, 65M12, 65M15, 65M60}
\keywords{Parabolic PDEs, boundary conditions, space-time FOSLS, convergence of adaptive algorithm}
\begin{document}

\begin{abstract} 
In [2019, Space-time least-squares finite elements for parabolic equations, arXiv:1911.01942] by F\"{u}hrer\& Karkulik, well-posedness of a space-time First-Order System Least-Squares formulation of the heat equation was proven. 
In the present work, this result is generalized to general second order parabolic PDEs with possibly inhomogenoeus boundary conditions, and plain convergence of a standard adaptive finite element method driven by the least-squares estimator is demonstrated. 
The proof of the latter easily extends to a large class of least-squares formulations.

\end{abstract}

\maketitle

\section{Introduction}
Currently, there is a growing interest in simultaneous space-time methods for solving parabolic evolution equations originally introduced in \cite{18.63,18.64}, see e.g.,~\cite{77.5,11,299,249.2,75.27,169.05,247.155,64.577,234.7,243.867,310.6,249.3,75.257}.
Main reasons are that, compared to classical time marching methods, space-time methods 
are much better suited for a massively parallel implementation, are guaranteed to give quasi-optimal approximations from the trial space that is employed, 
have the potential to drive optimally converging simultanously space-time adaptive refinement routines, 
and  they provide enhanced possibilities for reduced order modelling
of parameter-dependent problems.
On the other hand, space-time methods require more storage. This disadvantage however vanishes for problems of optimal control or data assimilation, for which the solution is needed simultaneously over the whole time interval anyway.

The common space-time variational formulation of a parabolic equation results in a bilinear form that is non-coercive.
For the heat equation $\partial_t u -\Delta_{\bf x} u = f$, $u(0,\cdot)=u_0$ on a time-space cylinder $I\times \Omega$, where $I:=(0,T)$ and $\Omega \subset \R^d$,  with homogeneous Dirichlet boundary conditions, the corresponding operator is a boundedly invertible linear mapping between $X$ and $Y' \times L_2(\Omega)$, where $X:=L_2(I;H^1_0(\Omega)) \cap H^1(I;H^{-1}(\Omega))$ and $Y:=L_2(I;H^1_0(\Omega))$.
As a consequence of the non-coercivity, it requires a careful selection of the test space to arrive at a stable Petrov--Galerkin system whose solution is a quasi-best approximation from the trial space.
To relax the conditions on the test space, a minimal residual Petrov--Galerkin discretization was introduced in \cite{11}.
It has an equivalent interpretation as a Galerkin discretization of an extended self-adjoint, indefinite mixed system, with the Riesz lift of the residual of the primal variable from the `trial space' being an additional variable from the `test space'.
In \cite{249.99}, uniform inf-sup stability was demonstrated for both trial and test space being finite element spaces of comparable dimensions, 
w.r.t. general partitions of the space-time cylinder into prismatic elements, which however must be decomposable into `time-slabs'. The latter means that a possibly non-uniform
partition of the time interval must be global in space, which does not align with the aim to permit fully-flexible local refinements in space and time.

In the recent work \cite{75.257} by F\"{u}hrer and Karkulik, for the aforementioned heat equation with forcing term $f \in L_2(I \times \Omega)$ and initial condition $u_0\in L_2(\Omega)$, it was proven that 
with $\widetilde U_{0}:=\{{\bf u}\in X \times L_2(I\times \Omega)^d\colon\divv {\bf u} \in L_2(I\times\Omega)\}$ equipped with the graph norm,
\begin{align*}
\argmin_{{\bf u}=(u_1,{\bf u}_2) \in \widetilde U_{0}} \|{\bf u}_2 +\nabla_{\bf x} u_1\|^2_{L_2(I\times\Omega)^d}+\|\divv {\bf u}_2-f\|_{L_2(I\times\Omega)}^2+\|u(0,\cdot)-u_0\|^2_{L_2(\Omega)}
\end{align*}
is a well-posed First-Order System Least-Squares (FOSLS) formulation for the pair of the solution $u=u_1$ and (minus) its spatial gradient $-\nabla_{\bf x} u={\bf u}_2$.
This formulation can already be found in \cite{23.5} without a proof of its well-posedness though.

The FOSLS formulation from \cite{75.257} has major advantages. The Euler--Lagrange equations resulting from the minimization problem correspond to a symmetric, coercive bilinear form on $\widetilde U_{0} \times \widetilde U_{0}$, so that the Galerkin approximation from \emph{any} conforming trial space is a quasi-best approximation from that space. In other words, there are no issues with stability or restrictions on the partitions of the space-time cylinder underlying the finite element spaces. The minimization is w.r.t.~$L_2$-norms, so that the arising stiffness matrix is computable and sparse and can be easily computed. The least-squares functional provides an a posteriori estimator that is equivalent to the norm on $\widetilde U_{0}$ of the error.
The squared estimator is a sum of squared local error indicators associated to the individual elements, which immediately suggests an adaptive solution method.

Considering general least-squares methods, we mention that  although a least-squares estimator is efficient and reliable, and the resulting adaptive routine is generally observed to converge, even with an optimal rate, 
a proof of ($Q$-linear) convergence of such an adaptive routine has only been given for a FOSLS formulation of Poisson's equation with D\"{o}rfler marking for a bulk parameter that is sufficiently close to $1$, see~\cite{cpb17}.

A disadvantage of the FOSLS method from \cite{75.257} is that the graph norm on $\widetilde U_{0}$ for the error in the pair $(u,-\nabla_{\bf x} u)$ is considerably stronger than the $X$-norm for the error in $u$.
This appears from the low convergence rates reported in \cite{75.257} for the adaptive routine with standard Lagrange finite element spaces applied to non-smooth solutions, e.g.,~as those that result from a discontinuity in the transition of 
 initial and boundary data.  Furthermore, as far as we know, an open problem is the development of optimal preconditioners for the space $\widetilde U_{0}$, which is an important issue in view of the fact that with space-time methods, a PDE posed on a $(d+1)$-dimensional domain has to be solved.
 
 In the current work, we contribute to a further development of the FOSLS method from \cite{75.257}. In particular,
\begin{itemize}
\item we show that  $\widetilde U_{0}$ is isomorphic to $U_0:=\{{\bf u}\in L_2(I;H^1_0(\Omega)) \times L_2(I\times \Omega)^d\colon\divv {\bf u} \in L_2(I\times\Omega)\}$ equipped with the graph norm (Proposition~\ref{equiv}), which circumvents the dual norm incorporated in the definition of $X$.
It is a key ingredient in the derivation of most of the other results from this work;
\item
we show that the FOSLS method applies to general parabolic equations of second order with homogeneous Dirichlet, homogeneous Neumann, or mixed homogeneous Dirichlet and Neumann boundary conditions (Theorem~\ref{thm2} and Proposition~\ref{prop:Bu2least});
\item we extend the FOSLS method to forcing functions $f \not\in L_2(I\times \Omega)$ (Proposition~\ref{prop:Bu2least});
\item by appending an additional term to the least-squares functional measuring the squared error in the boundary data, we extend the FOSLS method to inhomogeneous Dirichlet (Theorem~\ref{thmDir}) or Neumann data (Theorem~\ref{thmNeu}), where, however, the norms in which these errors are measured are not of $L_2$-type;
\item finally, using the framework developed by Siebert (\cite{249.025}), which particularly allows for relatively general marking strategies (Remark~\ref{rem:marking}), we prove plain convergence (Theorem~\ref{thm:error_convergence}) of the adaptive FOSLS method (Algorithm~\ref{alg:adaptive}) for homogeneous Dirichlet boundary conditions driven by the least-squares estimator. This convergence proof generalizes to a large class of least-squares formulations (Remark~\ref{rem:general ls}), including, e.g., the aforementioned FOSLS formulation of the Poisson model problem.
Independently, \cite{fp20} has recently used a similar proof idea to derive convergence of various least-squares formulations, excluding however the considered space-time FOSLS.
\end{itemize}

The remainder of the current section fixes some notation (Subsection~\ref{sec:notation}), recalls abstract parabolic evolution equations (Subsection~\ref{sec:abstract}), and introduces the particular instance of parabolic PDEs of second order (Subsection~\ref{S2ndorder})
 that will be considered throughout the manuscript.

\subsection{Notation}\label{sec:notation}
In this work, by $C \lesssim D$ we will mean that $C$ can be bounded by a multiple of $D$, independently of parameters on which C and D may depend. 
Obviously, $C \gtrsim D$ is defined as $D \lesssim C$, and $C\eqsim D$ as $C\lesssim D$ and $C \gtrsim D$.

For normed linear spaces $E$ and $F$, we will denote by $\cL(E,F)$ the normed linear space of bounded linear mappings $E \rightarrow F$, 
and by $\Lis(E,F)$ its subset of boundedly invertible linear mappings $E \rightarrow F$.
We write $E \hookrightarrow F$ to denote that $E$ is continuously embedded into $F$.
For simplicity only, we exclusively consider linear spaces over the scalar field $\R$.

For a Hilbert space $W$ that is densely and continuously embedded in a space of type $L_2(\Sigma)$, we mostly use the scalar product on $L_2(\Sigma)$ to denote its unique extension to the duality pairing on $W' \times W$.

\subsection{Abstract parabolic evolution equation}
\label{sec:abstract}
Let $V$ and $H$ be separable Hilbert spaces such that $V\hookrightarrow H$ with dense and compact embedding. 
Identifying $H$ with its dual, we obtain the Gelfand triple $V \hookrightarrow H \eqsim H' \hookrightarrow V'$.
For almost all $t \in I:=(0,T)$, let $a(t;\cdot,\cdot)$ be a bilinear form on $V \times V$ such that 
for any $\mu,\lambda\in V$, $t\mapsto a(t;\mu,\lambda)$ is measurable on $I$, and such that 
for some constant $\varrho \geq 0$,
for a.e.~$t \in I$, and all $\mu,\lambda$,
\begin{alignat*}{2} 
|a(t;\mu,\lambda)| 
& \lesssim  \|\mu\|_{V} \|\lambda\|_{V}  \quad &&\text{({\em boundedness})},
\\ \label{2}
 a(t;\mu,\mu)  +\varrho \|\mu\|^2 &\gtrsim \|\mu\|^2_{V}  \quad &&\text{({\em G{\aa}rding inequality})}.
\end{alignat*}
With $(A(t)\cdot)(\cdot):=a(t;\cdot,\cdot)$, we consider the \emph{parabolic initial value problem} of finding $u\colon I \rightarrow V$ such that
\begin{align*}
\left\{
\begin{array}{rl} 
\frac{d u}{d t}(t) +A(t) u(t)&\!\!\!= g(t) \quad\text{for a.e. } t \in I,\\
u(0) &\!\!\!= u_0.
\end{array}
\right.
\end{align*}

A proof of the following result is found in \cite{247.15}, see also \cite[Chapter~IV, \S26]{314.9} and \cite[Chapter~XVIII, \S3]{63}.

\begin{theorem}\label{thm1}
With $X:=L_2(I;{V}) \cap H^1(I;V')$, $Y:=L_2(I;{V})$,
\begin{align*}
(Bu)(v):= \int_I \Big\{ 
(\partial_t u(t,\cdot))(v(t,\cdot)) 
+
a(t;u(t),v(t)) \Big\}dt,
\end{align*}
and $\gamma_0:= u\mapsto u|_{t=0}$, it holds that
\begin{align*}
\left[\begin{array}{@{}c@{}} B \\ \gamma_0\end{array} \right]\in \Lis\big(X,(Y \times H)'\big),
\end{align*}
with upper bounds for the norm of the operator and that of its inverse 
only dependent on upper bounds for the boundedness constant, the reciprocal of the constant in the G{\aa}rding inequality, and $\varrho$.
\end{theorem}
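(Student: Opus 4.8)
The plan is to deduce $\cB\in\Lis(X,(Y\times H)')$, where I write $\cB u:=(Bu,\gamma_0 u)\in(Y\times H)'\cong Y'\times H'$ (identifying $\gamma_0 u\in H\cong H'$, so that $\|\cB u\|_{(Y\times H)'}^2=\|Bu\|_{Y'}^2+\|\gamma_0 u\|_H^2$), from the Hilbert-space form of the Banach--Ne\v{c}as--Babu\v{s}ka theorem: a bounded linear map between Hilbert spaces is boundedly invertible iff it is bounded below and has dense range (equivalently, its adjoint is injective). Throughout I would use two classical facts about $X=L_2(I;V)\cap H^1(I;V')$: the continuous embedding $X\hookrightarrow C(\bar I;H)$, which makes $\gamma_0$ bounded into $H$, and the integration-by-parts identity $\int_I\langle\partial_t u(t),u(t)\rangle\,dt=\tfrac12(\|u(T)\|_H^2-\|u(0)\|_H^2)$ for $u\in X$, where $\langle\cdot,\cdot\rangle$ is the $V'\times V$ pairing. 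Before the main estimate I would remove the G\aa{}rding defect by the exponential rescaling $u(t)=e^{\varrho t}\tilde u(t)$, with a companion rescaling $v\mapsto e^{\varrho t}v$ of the test functions; this maps $X$ and $Y$ isomorphically onto themselves (changing norms only by factors $e^{\pm\varrho T}$), fixes $\gamma_0$ since $e^{0}=1$, and replaces $a(t;\cdot,\cdot)$ by $a(t;\cdot,\cdot)+\varrho(\cdot,\cdot)_H$, which is genuinely coercive on $V$, say with constant $\alpha>0$. Concretely $\cB=\diag(\cdot,\identity)\circ\tilde\cB\circ(\text{iso})$ up to isomorphisms, so I may assume coercivity and recover the general case with constants depending additionally only on $\varrho T$. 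Boundedness of $\cB$ is then immediate from the boundedness of $a$ (giving $|(Bu)(v)|\le(\|\partial_t u\|_{Y'}+M\|u\|_Y)\|v\|_Y\lesssim\|u\|_X\|v\|_Y$) and from $\|\gamma_0 u\|_H\lesssim\|u\|_X$.

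The core is the lower bound. Given $u\in X$, I would first test $Bu$ against $v=u\in Y$ and combine the integration-by-parts identity with coercivity to obtain
\begin{equation*}
(Bu)(u)=\int_I\langle\partial_t u(t),u(t)\rangle\,dt+\int_I a(t;u(t),u(t))\,dt\ge\tfrac12\|u(T)\|_H^2-\tfrac12\|\gamma_0 u\|_H^2+\alpha\|u\|_Y^2.
\end{equation*}
Since $(Bu)(u)\le\|Bu\|_{Y'}\|u\|_Y$, a Young-inequality absorption yields $\|u\|_Y^2\lesssim\|Bu\|_{Y'}^2+\|\gamma_0 u\|_H^2$. To recover the time-derivative part, for $v\in Y$ I would use $\int_I\langle\partial_t u(t),v(t)\rangle\,dt=(Bu)(v)-\int_I a(t;u(t),v(t))\,dt$, bounded by $(\|Bu\|_{Y'}+M\|u\|_Y)\|v\|_Y$, so that $\|\partial_t u\|_{L_2(I;V')}\le\|Bu\|_{Y'}+M\|u\|_Y$. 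Adding the two estimates gives $\|u\|_X^2=\|u\|_Y^2+\|\partial_t u\|_{L_2(I;V')}^2\lesssim\|Bu\|_{Y'}^2+\|\gamma_0 u\|_H^2=\|\cB u\|_{(Y\times H)'}^2$, which is the desired inf--sup bound; in particular $\cB$ is injective with closed range.

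It remains to establish dense range, equivalently that the adjoint is injective: if $(v,w)\in Y\times H$ satisfies $(Bu)(v)+(\gamma_0 u,w)_H=0$ for all $u\in X$, then $(v,w)=0$. Testing first with $u\in C_c^\infty(I;V)$ (so $\gamma_0 u=0$) shows that $v$ is a weak solution of the time-reversed backward equation $-\partial_t v+A(\cdot)^*v=0$. After the change $t\mapsto T-t$ this is a forward parabolic problem with the same structural hypotheses (the G\aa{}rding inequality is inherited by the transpose form), to which the already-proven stability estimate applies and forces $v=0$. Testing then with general $u\in X$ and integrating by parts to expose the boundary term $(u(0),w)_H$ forces $w=0$. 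Together with the lower bound this gives surjectivity, hence $\cB\in\Lis(X,(Y\times H)')$, with operator and inverse norms depending only on $M$, $1/\alpha$, and $\varrho T$ as claimed.

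I expect the genuinely delicate points to be twofold. First, the embedding $X\hookrightarrow C(\bar I;H)$ and the integration-by-parts identity require density of smooth functions in $X$ and careful handling of the Gelfand-triple pairings; these underpin both the well-definedness of $\gamma_0$ and the key coercivity computation. Second, and most technical, is the surjectivity step: one must argue rigorously that the distributional backward equation places $v$ in the correct space $L_2(I;V)\cap H^1(I;V')$ so that its trace at $t=T$ is defined and the established inf--sup estimate can legitimately be invoked for the time-reversed problem. By contrast, the forward lower bound itself is routine once these structural tools and the reduction to the coercive case are in hand.
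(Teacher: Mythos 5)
The paper does not actually prove Theorem~\ref{thm1}: it only points to the references cited immediately above the statement (Schwab--Stevenson, Wloka, Dautray--Lions), so there is no in-paper proof to compare against. Your argument is, in essence, the classical one found in those references: reduction to a coercive form by the exponential substitution $u=e^{\varrho t}\tilde u$, the a priori bound obtained by testing $Bu$ with $u$ itself and using $\int_I\langle\partial_t u,u\rangle\,dt=\tfrac12(\|u(T)\|_H^2-\|u(0)\|_H^2)$, recovery of $\|\partial_t u\|_{L_2(I;V')}$ directly from the equation, and density of the range via injectivity of the adjoint. The two points you flag as delicate (the embedding $X\hookrightarrow C(\bar I;H)$ together with the integration-by-parts identity, and the bootstrap $\partial_t v=A(\cdot)^*v\in L_2(I;V')$ that places the adjoint solution $v$ in $X$ so that its traces exist) are indeed exactly where the care is needed, and your treatment of them is correct in outline.

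There is one genuine ordering gap in the surjectivity step. After deriving $\partial_t v=A^*v$ you invoke the already-proven stability estimate for the time-reversed problem to conclude $v=0$; but that estimate bounds $\|\tilde v\|_X$ by $\|\tilde B\tilde v\|_{Y'}+\|\tilde v(0)\|_H$, and $\tilde v(0)=v(T)$ is not yet known to vanish at that point of your argument. You must first test with general $u\in X$ and integrate by parts, which (using $\partial_t v=A^*v$) collapses to $(u(T),v(T))_H+(u(0),\,w-v(0))_H=0$ for all $u\in X$; since $u(0)$ and $u(T)$ can be prescribed independently in dense subsets of $H$, this yields both $v(T)=0$ --- which then legitimizes the stability estimate and gives $v=0$ --- and $w=v(0)=0$. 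So the needed ingredient is already present in your sketch, but it must come \emph{before}, not after, the conclusion $v=0$. A final cosmetic remark: the rescaling introduces factors of order $e^{\varrho T}$ into the constants; since $T$ is fixed in the setting this is consistent with the claimed dependence on an upper bound for $\varrho$.
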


So for $(g,u_0) \in Y' \times H$, a well-posed variational formulation of the parabolic problem reads as finding $u \in X$ such that 
$(Bu,\gamma_0 u)=(g,u_0)$.

\subsection{Parabolic equations of second order} \label{S2ndorder}
For a bounded Lipschitz domain $\Omega \subset \R^d$ with outer normal ${\bf n}_{\bf x}\in\R^d$,
relatively open subsets $\Gamma_D$ and $\Gamma_N$ of $\partial\Omega$ with $\Gamma_D \cap \Gamma_N=\emptyset$ and $\overline{\Gamma_D \cup \Gamma_N}=\partial\Omega$, ${\bf b} \in L_\infty(I \times \Omega)^d$, $c \in L_\infty(I \times \Omega)$, and ${\bf A}={\bf A}^\top \in L_\infty(I \times \Omega)^{d\times d}$ uniformly positive definite, we consider the problem of finding $u\colon I \times \Omega \rightarrow \R$ that for given data $f$, $\phi$, $u_D$, and $u_0$ satisfies 
\be \label{2nd}
\left\{
\begin{array}{rcll}
\partial_t u - \divv_{\bf x}
{\bf A} \nabla_{\bf x} u+{\bf b}\cdot \nabla_{\bf x} u + c u & = & f & \text{ on } I \times \Omega,\\
({\bf A} \nabla_{\bf x} u)\cdot {\bf n}_{\bf x} & = & \phi & \text{ on }I \times \Gamma_N,\\
u & = & u_D & \text{ on }I \times \Gamma_D,\\
u(0,\cdot) & = & u_0 & \text{ on }\Omega.
\end{array}
\right.
\ee
Taking until Section~\ref{sec:inhom} a homogeneous Dirichlet datum  $u_D=0$, a variational formulation of \eqref{2nd} leads to a problem as in Theorem~\ref{thm1}, where
$V:=H^1_{D}(\Omega)=\{u \in H^1(\Omega)\colon u|_{\Gamma_D}=0 \}$ and $H:=L_2(\Omega)$, so that
\begin{align*}
X=L_2(I;H_D^1(\Omega)) \cap H^{1}(I;H_D^1(\Omega)'), \qquad Y=L_2(I;H_D^1(\Omega)),
\end{align*}
the bilinear form reads as
\begin{align*}
a(t;\mu,\lambda):=\int_\Omega {\bf A}(t,{\bf x})\nabla \mu({\bf x}) \cdot \nabla \lambda({\bf x})+({\bf b}(t,{\bf x}) \cdot \nabla \mu({\bf x}) + c(t,{\bf x}) \mu({\bf x})) \lambda({\bf x}) \,d{\bf x},
\end{align*}
and the forcing term reads as
\begin{align}\label{eq:RHS g}
g(v):=\int_{I \times \Omega} f v \,d{\bf x}\,dt+\int_{I \times \Gamma_N} \phi v \,d {\bf s}.
\end{align}
As follows from Theorem~\ref{thm1}, this variational problem is actually well-posed for \emph{any} $g \in Y'$. 
For a discussion in which sense the solution of the variational problem can be interpreted as a solution of \eqref{2nd}, we refer to \cite[pages 524--528]{63}.


Concerning the bilinear form $a$, both its boundedness constant, the reciprocal of the constant in the G{\aa}rding inequality, and $\varrho$ can be bounded in terms of upper bounds for $\|{\bf b}\|_{L_\infty(I \times \Omega)^d}$, $\|c\|_{L_\infty(I \times \Omega)}$, $\|{\bf A}\|_{L_\infty(I \times \Omega)^{d\times d}}$, and $\|{\bf A}^{-1}\|_{L_\infty(I \times \Omega)^{d\times d}}$.

\section{Formulation as a first-order system}

\subsection{Homogeneous boundary conditions}

For the case that $g \in L_2(I\times \Omega)$, we will derive a system for ${\bf u}=(u_1,{\bf u}_2)$ $=$ $(u,-{\bf A}\nabla_{\bf x} u)$ with $u$ being the solution of the variational problem $(Bu,\gamma_0 u)=(g,u_0)$ from Section~\ref{S2ndorder}. Recall that such a problem arises from \eqref{2nd} when 
besides $u_D=0$, it holds that $f \in L_2(I \times \Omega)$ and $\phi=0$.
Generally at the expense of having to solve an additional (elliptic) PDE, general $g \in Y'$ (i.e. $f \not\in L_2(I \times\Omega)$ and/or Neumann datum $\phi \neq 0$) will be handled as well.

Let
\begin{align*}
U:=\{{\bf u}=(u_1,{\bf u}_2)\in L_2(I;H^1(\Omega)) \times L_2(I\times \Omega)^d\colon\divv {\bf u} \in L_2(I\times\Omega)\}
\end{align*}
equipped with graph norm
\begin{align}\label{eq:U norm}
\|{\bf u}\|_U^2 := \|u_1\|_{L_2(I;H^1(\Omega))}^2+\|{\bf u}_2\|_{L_2(I;L_2(\Omega)^d)}^2+\|\divv {\bf u}\|_{L_2(I\times\Omega)}^2.
\end{align}
Knowing that $\divv\colon L_2(I\times \Omega)^{d+1}\supset \dom(\divv) \rightarrow L_2(I\times\Omega)$ is a closed linear operator (a necessary condition for $H(\divv;I\times \Omega)$ being a Hilbert space), from $L_2(I;H^1(\Omega)) \times L_2(I\times \Omega)^d \hookrightarrow L_2(I\times \Omega)^{d+1}$, it  follows that
$\divv\colon L_2(I;H^1(\Omega)) \times L_2(I\times \Omega)^d\supset \dom(\divv) \rightarrow L_2(I\times\Omega)$ is a closed linear operator. Together with the facts that $L_2(I;H^1(\Omega)) \times L_2(I\times \Omega)^d$ and $L_2(I\times\Omega)$ are Hilbert spaces, this shows that
$U$ is a Hilbert space.

With ${\bf n}=(n_t,{\bf n}_{\bf x})$ denoting the outer normal vector on the boundary of $I \times \Omega$,
using that ${\bf u} \mapsto {\bf u}|_{I \times \Gamma_N}\cdot{\bf n}
\in \cL\big(H(\divv;I \times \Omega), H_{00}^{\frac12}(I \times \Gamma_N)'\big)$ 
we define the closed subspace $U_0$ of $U$ by
\begin{align*}
U_0:=\{{\bf u}\in L_2(I;H_D^1(\Omega)) \times L_2(I\times \Omega)^d\colon\divv {\bf u} \in L_2(I\times\Omega),\,
{\bf u}|_{I \times \Gamma_N}\cdot{\bf n}=0\}.
\end{align*}

We start with showing that $U_0$ is isomorphic to a seemingly smaller space that was employed in \cite{75.257}.
\begin{proposition} \label{equiv} It holds that
\begin{align*}
U_{0} \eqsim \widetilde U_{0}:=\{{\bf u}\in X \times L_2(I\times \Omega)^d\colon\divv {\bf u} \in L_2(I\times\Omega),\,
{\bf u}|_{I \times\Gamma_N}\cdot{\bf n}=0\},
\end{align*}
equipped with the graph norm
\begin{align*}
\|{\bf u}\|_{{\widetilde U}_0}^2:=\|u_1\|_{L_2(I;H^1(\Omega))}^2+\|\partial_t u_1\|_{L_2(I;H^1_D(\Omega)')}^2+\|{\bf u}_2\|_{L_2(I;L_2(\Omega)^d)}^2+\|\divv {\bf u}\|_{L_2(I\times\Omega)}^2.
\end{align*}
\end{proposition}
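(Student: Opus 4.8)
The plan is to show that $U_0$ and $\widetilde U_0$ coincide as sets and that their norms are equivalent, so that the identity map is the claimed isomorphism. Since the $\widetilde U_0$-norm differs from the $U_0$-norm by precisely the extra summand $\|\partial_t u_1\|_{L_2(I;H^1_D(\Omega)')}^2$, while all remaining summands agree, the inclusion $\widetilde U_0\subseteq U_0$ and the bound $\|{\bf u}\|_U\le\|{\bf u}\|_{\widetilde U_0}$ are immediate. The entire content is therefore the reverse direction: for ${\bf u}=(u_1,{\bf u}_2)\in U_0$ one must show that the distributional time derivative $\partial_t u_1$ already lies in $L_2(I;H^1_D(\Omega)')$ and is controlled by $\|{\bf u}\|_U$. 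This simultaneously yields $U_0\subseteq\widetilde U_0$ and $\|{\bf u}\|_{\widetilde U_0}\lesssim\|{\bf u}\|_U$.

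First I would record the pointwise decomposition of the space-time divergence, $\divv{\bf u}=\partial_t u_1+\divv_{\bf x}{\bf u}_2$, as a distribution on $I\times\Omega$, and test this identity against tensor functions $v(t,{\bf x})=\theta(t)w({\bf x})$ with $\theta\in C_c^\infty(I)$ and $w\in H^1_D(\Omega)$, integrating by parts in space-time. Since ${\bf u}\in H(\divv;I\times\Omega)$, the Gauss formula produces a boundary pairing over $\partial(I\times\Omega)$, and the point is that all three of its pieces vanish: the contributions on $\{0,T\}\times\Omega$ disappear because $\theta$ has compact support in $I$, the contribution on $I\times\Gamma_D$ disappears because $w|_{\Gamma_D}=0$, and the contribution on $I\times\Gamma_N$ disappears by the defining constraint ${\bf u}|_{I\times\Gamma_N}\cdot{\bf n}=0$. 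What survives is the identity $-\int_I\theta'(t)\int_\Omega u_1 w\,d{\bf x}\,dt=\int_I\theta(t)\big(\int_\Omega(\divv{\bf u})w+{\bf u}_2\cdot\nabla_{\bf x}w\,d{\bf x}\big)\,dt$, valid for all such $\theta$ and $w$.

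Reading this identity for fixed $w$ shows that $t\mapsto\int_\Omega u_1(t)w\,d{\bf x}$ has weak derivative $t\mapsto\int_\Omega(\divv{\bf u})(t)w+{\bf u}_2(t)\cdot\nabla_{\bf x}w\,d{\bf x}$ in $L_2(I)$; hence $\partial_t u_1\in L_2(I;H^1_D(\Omega)')$ with the representation $\langle\partial_t u_1(t),w\rangle=\int_\Omega(\divv{\bf u})(t)w+{\bf u}_2(t)\cdot\nabla_{\bf x}w\,d{\bf x}$ for a.e.\ $t$. Estimating the right-hand side by Cauchy--Schwarz together with $\|w\|_{L_2(\Omega)},\|\nabla_{\bf x}w\|_{L_2(\Omega)^d}\le\|w\|_{H^1_D(\Omega)}$ gives $\|\partial_t u_1(t)\|_{H^1_D(\Omega)'}\le\|(\divv{\bf u})(t)\|_{L_2(\Omega)}+\|{\bf u}_2(t)\|_{L_2(\Omega)^d}$, and squaring and integrating over $I$ bounds $\|\partial_t u_1\|_{L_2(I;H^1_D(\Omega)')}^2$ by $2\big(\|\divv{\bf u}\|_{L_2(I\times\Omega)}^2+\|{\bf u}_2\|_{L_2(I;L_2(\Omega)^d)}^2\big)\lesssim\|{\bf u}\|_U^2$. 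This is exactly the missing summand, so $\|{\bf u}\|_{\widetilde U_0}\lesssim\|{\bf u}\|_U$ and the equivalence follows.

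I expect the main obstacle to be the rigorous justification of the space-time integration by parts, and in particular the vanishing of the lateral Neumann term. Invoking ${\bf u}|_{I\times\Gamma_N}\cdot{\bf n}=0$ requires the trace of $v=\theta\otimes w$ on $I\times\Gamma_N$ to be an admissible partner in the duality with $H^{\frac12}_{00}(I\times\Gamma_N)'$, i.e.\ to lie in $H^{\frac12}_{00}(I\times\Gamma_N)$; this is where the compact support of $\theta$ in time and the vanishing of $w$ on $\Gamma_D$ are genuinely used, since they force $v|_{I\times\Gamma_N}$ to vanish near the corners $\{0,T\}\times\overline{\Gamma_N}$ and $I\times(\overline{\Gamma_D}\cap\overline{\Gamma_N})$. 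Handling this corner behaviour correctly, together with the density argument reducing the general $H(\divv)$ Gauss formula to tensor test functions, is the delicate part; everything else is bookkeeping.
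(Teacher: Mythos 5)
Your proof is correct, and its mathematical core is exactly the paper's: the paper also reduces Proposition~\ref{equiv} to the single estimate of Lemma~\ref{pre-equiv}, namely that for ${\bf u}\in H_{0,I\times\Gamma_N}(\divv;I\times\Omega)$ one has $\partial_t u_1\in L_2(I;H^1_D(\Omega)')$ via the representation $\langle \partial_t u_1,v\rangle=\int_{I\times\Omega} v\,\divv{\bf u}+{\bf u}_2\cdot\nabla_{\bf x}v\,d{\bf x}\,dt$ (no boundary term, by the Neumann trace condition), followed by your Cauchy--Schwarz bound. The only genuine difference is where the regularization sits. The paper proves the identity for \emph{smooth} ${\bf u}$ by classical integration by parts and then invokes density of such fields in $H_{0,I\times\Gamma_N}(\divv;I\times\Omega)$; this density is the one nontrivial ingredient of its proof and requires adapting a known density theorem for $H(\divv)$ to the mixed-boundary situation. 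You instead keep ${\bf u}$ general and regularize on the test side, pairing the weak normal trace against tensor products $\theta\otimes w$ with $\theta\in C_c^\infty(I)$ and $w\in H^1_D(\Omega)$; the technical burden then shifts to checking that $(\theta\otimes w)|_{I\times\Gamma_N}$ is an admissible partner in the $H^{\frac12}_{00}(I\times\Gamma_N)'$ duality, which you rightly single out as the delicate point. Since the normal trace on $I\times\Gamma_N$ is defined precisely through the Gauss formula applied to $H^1(I\times\Omega)$ functions vanishing on $\partial(I\times\Omega)\setminus\overline{I\times\Gamma_N}$ --- and your tensor products do vanish there, thanks to $\supp\theta\subset I$ compact and $w|_{\Gamma_D}=0$ --- this admissibility holds essentially by definition, so your route trades the density-in-$H(\divv)$ lemma for care with the $H^{\frac12}_{00}$ duality; both buy the same estimate with comparable effort. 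One small remaining point in your write-up: after obtaining, for each fixed $w$, the scalar weak-derivative identity, you should note (as you implicitly do) that $t\mapsto\big(w\mapsto\int_\Omega(\divv{\bf u})(t)w+{\bf u}_2(t)\cdot\nabla_{\bf x}w\,d{\bf x}\big)$ is a (measurable, square-integrable) $H^1_D(\Omega)'$-valued function and that identities against all $\theta\otimes w$ suffice to identify it with the vector-valued distributional derivative $\partial_t u_1$; this is standard and does not affect correctness.
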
%

\noindent This proposition is a direct consequence of the following lemma.

\begin{lemma} \label{pre-equiv}
For ${\bf u} \in H_{0,I \times \Gamma_N}(\divv;I \times \Omega):=\{{\bf u} \in H(\divv;I \times \Omega)\colon {\bf u}|_{I \times \Gamma_N}\cdot{\bf n} =0\}$,  it holds that $\partial_t u_1 \in L_2(I;H^1_D(\Omega)')$ with
\begin{align*}
\|\partial_t u_1\|_{L_2(I;H^1_D(\Omega)')} \leq \|{\bf u}\|_{H(\divv;I \times \Omega)}.\end{align*}
\end{lemma}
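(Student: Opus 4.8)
The plan is to characterise the distributional time derivative $\partial_t u_1$ through the space-time divergence identity $\divv {\bf u} = \partial_t u_1 + \divv_{\bf x} {\bf u}_2$ and to bound it by testing against functions that are smooth in time. First I would fix a convenient dense class of test functions in $L_2(I;H^1_D(\Omega))$, namely $v \in C_c^\infty(I;H^1_D(\Omega))$ (equivalently, finite sums $v(t,{\bf x})=\sum_j \phi_j(t)\eta_j({\bf x})$ with $\phi_j\in C_c^\infty(I)$ and $\eta_j\in H^1_D(\Omega)$). Each such $v$ lies in $H^1(I\times\Omega)$, vanishes near the temporal ends $t=0,T$, and has spatial trace vanishing on $\Gamma_D$; for these $v$ the $L_2(I;H^1_D(\Omega))$-action of $\partial_t u_1$ is, by definition of the weak time derivative, $\langle \partial_t u_1, v\rangle = -\int_{I\times\Omega} u_1\,\partial_t v\,d{\bf x}\,dt$.

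The central step is Green's identity for ${\bf u}\in H(\divv;I\times\Omega)$ tested against such a $v$,
\begin{align*}
\int_{I\times\Omega} (\divv {\bf u})\, v + {\bf u}\cdot\nabla v\,d{\bf x}\,dt = \langle {\bf u}\cdot{\bf n},\, \gamma v\rangle_{\partial(I\times\Omega)},
\end{align*}
where $\nabla=(\partial_t,\nabla_{\bf x})$ is the full space-time gradient and $\gamma$ the space-time trace. Since $v$ vanishes near $t=0,T$, its trace is supported on the lateral boundary $I\times\partial\Omega$, where it vanishes on $I\times\Gamma_D$; moreover on $I\times\partial\Omega$ the outer normal is ${\bf n}=(0,{\bf n}_{\bf x})$, so ${\bf u}\cdot{\bf n}={\bf u}_2\cdot{\bf n}_{\bf x}$ is exactly the spatial normal flux whose restriction to $I\times\Gamma_N$ is assumed to vanish. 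Hence the right-hand side is the $H^{\frac12}_{00}(I\times\Gamma_N)'\times H^{\frac12}_{00}(I\times\Gamma_N)$ pairing of the zero functional ${\bf u}|_{I\times\Gamma_N}\cdot{\bf n}$ with $\gamma v|_{I\times\Gamma_N}$, and therefore zero. Writing ${\bf u}\cdot\nabla v = u_1\partial_t v + {\bf u}_2\cdot\nabla_{\bf x} v$ and rearranging then yields the working identity
\begin{align*}
\langle \partial_t u_1, v\rangle = -\int_{I\times\Omega} u_1\,\partial_t v\,d{\bf x}\,dt = \int_{I\times\Omega} (\divv {\bf u})\, v + {\bf u}_2\cdot\nabla_{\bf x} v\,d{\bf x}\,dt,
\end{align*}
in which the troublesome $\partial_t v$ has been traded for quantities controlled by ${\bf u}$.

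From here the estimate is a two-fold Cauchy--Schwarz bound: the right-hand side is at most $\big(\|\divv {\bf u}\|_{L_2(I\times\Omega)}^2+\|{\bf u}_2\|_{L_2(I\times\Omega)^d}^2\big)^{1/2}\big(\|v\|_{L_2(I\times\Omega)}^2+\|\nabla_{\bf x} v\|_{L_2(I\times\Omega)^d}^2\big)^{1/2}$, where the first factor is $\leq \|{\bf u}\|_{H(\divv;I\times\Omega)}$ (adding back $\|u_1\|_{L_2}^2$) and the second equals $\|v\|_{L_2(I;H^1(\Omega))}=\|v\|_{L_2(I;H^1_D(\Omega))}$. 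Since $C_c^\infty(I;H^1_D(\Omega))$ is dense in $L_2(I;H^1_D(\Omega))$, the bounded functional $v\mapsto\langle\partial_t u_1,v\rangle$ extends uniquely to all of $L_2(I;H^1_D(\Omega))$; invoking the identification $L_2(I;H^1_D(\Omega))'\cong L_2(I;H^1_D(\Omega)')$ shows $\partial_t u_1\in L_2(I;H^1_D(\Omega)')$ with $\|\partial_t u_1\|_{L_2(I;H^1_D(\Omega)')}\leq \|{\bf u}\|_{H(\divv;I\times\Omega)}$, as claimed.

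The step I expect to be the main obstacle is the rigorous justification that the boundary term vanishes. One must invoke the weak space-time normal-trace theory for $H(\divv;I\times\Omega)$ on the Lipschitz cylinder, verify that $\gamma v|_{I\times\Gamma_N}$ genuinely lands in $H^{\frac12}_{00}(I\times\Gamma_N)$ (this is exactly where the vanishing of $v$ on $\overline{\Gamma_D}\cap\overline{\Gamma_N}$ and near $t=0,T$ is used), and confirm that ``restricting'' ${\bf u}\cdot{\bf n}$ to $I\times\Gamma_N$ through the map ${\bf u}\mapsto{\bf u}|_{I\times\Gamma_N}\cdot{\bf n}$ recalled before the proposition is compatible with localising the global boundary pairing. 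Everything else---the density of the test class and the Cauchy--Schwarz bookkeeping---is routine.
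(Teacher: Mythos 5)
Your argument is correct, but it reaches the estimate by a genuinely different route from the paper. The paper first establishes the identity $\int_{I\times\Omega}{\bf u}_2\cdot\nabla_{\bf x}v\,d{\bf x}\,dt=-\int_{I\times\Omega}v\,\divv_{\bf x}{\bf u}_2\,d{\bf x}\,dt$ for \emph{smooth} ${\bf u}$ and $v$, deduces the bound from $\partial_t u_1=\divv{\bf u}-\divv_{\bf x}{\bf u}_2$ by duality, and then extends to general ${\bf u}$ using density of smooth vector fields in $H_{0,I\times\Gamma_N}(\divv;I\times\Omega)$ --- a nontrivial ingredient for a boundary portion $I\times\Gamma_N$, which the paper handles by adapting a density theorem from the literature. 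You instead keep ${\bf u}$ arbitrary and put all the smoothness on the test functions: you apply the weak Green's identity for $H(\divv;I\times\Omega)$ against $v\in C_c^\infty(I;H^1_D(\Omega))$, kill the boundary pairing using the vanishing of ${\bf u}|_{I\times\Gamma_N}\cdot{\bf n}$ together with the support properties of $\gamma v$, and conclude by Cauchy--Schwarz and density of these $v$ in $L_2(I;H^1_D(\Omega))$ (which is elementary, unlike the density the paper needs). The price you pay is the localization of the normal-trace pairing to $I\times\Gamma_N$ and the verification that $\gamma v|_{I\times\Gamma_N}\in H^{\frac12}_{00}(I\times\Gamma_N)$; you correctly flag this as the delicate point, and it is in fact exactly the content of the trace map ${\bf u}\mapsto{\bf u}|_{I\times\Gamma_N}\cdot{\bf n}\in\cL\big(H(\divv;I\times\Omega),H^{\frac12}_{00}(I\times\Gamma_N)'\big)$ that the paper already invokes when defining $U_0$, so your route is arguably more self-contained given the paper's standing assumptions. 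A small bonus of your version: performing the Cauchy--Schwarz estimate jointly yields the factor $\big(\|\divv{\bf u}\|_{L_2(I\times\Omega)}^2+\|{\bf u}_2\|_{L_2(I\times\Omega)^d}^2\big)^{1/2}\le\|{\bf u}\|_{H(\divv;I\times\Omega)}$ with constant exactly $1$, which is cleaner than summing the two dual-norm bounds separately.
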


\begin{proof}
For smooth ${\bf u} \in H_{0,I \times \Gamma_N}(\divv;I \times \Omega)$ (for which ${\bf u}\cdot {\bf n}$ is defined in the classical pointwise sense), we have  $\divv {\bf u}=\partial_t u_1+\divv_{\bf x} {\bf u}_2$. For smooth $v \in L_2(I;H^1_D(\Omega))$
we have 
\begin{align*}
\int_{I \times \Omega} {\bf u}_2 \cdot \nabla_{\bf x} v \,d{\bf x}\,dt&=
-\int_{I \times \Omega} v  \divv_{\bf x}{\bf u}_2 \,d{\bf x}\,dt+\int_{I \times \Gamma_N}   {\bf u}_2\cdot {\bf n}_{\bf x} \,v\,ds
\\&=-\int_{I \times \Omega} v  \divv_{\bf x}{\bf u}_2 \,d{\bf x}\,dt +\int_{I \times \Gamma_N} {\bf u}\cdot {\bf n}\,v \,ds\\
&=-\int_{I \times \Omega} v  \divv_{\bf x}{\bf u}_2 \,d{\bf x}\,dt.
\end{align*}
Since the set of such $v$ is dense in $L_2(I;H^1_D(\Omega))$, we conclude
\begin{align*}
\|\partial_t u_1\|_{L_2(I;H^1_D(\Omega)')}
&\le \|\divv {\bf u}\|_{L_2(I;H^1_D(\Omega)')} + \|\divv_{\bf x} {\bf u_2} \|_{L_2(I;H^1_D(\Omega)')} 
\\
&\leq \|\divv {\bf u}\|_{L_2(I\times \Omega)}+\|{\bf u}_2\|_{L_2(I\times\Omega)^d} \leq \|{\bf u}\|_{H(\divv;I \times \Omega)}.
\end{align*}
Since the set of such  ${\bf u}$ is dense in $H_{0,I \times \Gamma_N}(\divv;I \times \Omega)$, the proof is completed.
For $\partial (I\times\Omega)$ instead of $I\times\Gamma_N$, the corresponding density result is well-known. 
The proof~\cite[Theorem~2.6]{75.5} easily generalizes to $I\times\Gamma_N$ using that the term $l_{d+2}\in H^1(I\times\Omega)$ from there additionally satisfies that $l_{d+2}|_{\partial(I\times\Omega)\setminus\overline{I\times\Gamma_N}}=0$ as $l_{d+2}|_{\partial(I\times\Omega)}$ is orthogonal to ${\bf u}\cdot {\bf n}$ for all smooth ${\bf u} \in H_{0,I \times \Gamma_N}(\divv;I \times \Omega)$.
\end{proof}

The following theorem generalizes~\cite{75.257}, see Remark~\ref{litt} for a discussion.

\begin{theorem}[homogeneous Dirichlet] \label{thm2}
It holds that
\begin{align*}
G\colon (u_1,{\bf u}_2) \mapsto ({\bf u}_2+{\bf A} \nabla_{\bf x} u_1,\divv {\bf u}-{\bf b} \cdot {\bf A}^{-1}  {\bf u}_2 + c u_1 ,u_1(0,\cdot))\\
 \in \Lis(U_{0},L_2(I\times\Omega)^d \times L_2(I\times\Omega) \times L_2(\Omega)).
\end{align*}
\end{theorem}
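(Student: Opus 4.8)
The plan is to reduce the assertion to the well-posedness of the abstract parabolic problem, Theorem~\ref{thm1}, with $V=H^1_D(\Omega)$ and $H=L_2(\Omega)$: after eliminating the flux variable ${\bf u}_2$, the system $G{\bf u}=(f_1,f_2,w_0)$ is precisely the weak form of the second-order parabolic equation \eqref{2nd} with homogeneous mixed Dirichlet--Neumann boundary conditions and initial datum $w_0$.

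\emph{Boundedness.} First I would verify $G\in\cL\big(U_0,L_2(I\times\Omega)^d\times L_2(I\times\Omega)\times L_2(\Omega)\big)$. The first target component ${\bf u}_2+{\bf A}\nabla_{\bf x}u_1$ and the second $\divv{\bf u}-{\bf b}\cdot{\bf A}^{-1}{\bf u}_2+cu_1$ are bounded in terms of $\|{\bf u}\|_{U_0}$ using $\divv{\bf u}\in L_2(I\times\Omega)$ and the $L_\infty$-bounds on ${\bf A},{\bf A}^{-1},{\bf b},c$. For the initial trace $u_1(0,\cdot)$ I would invoke Proposition~\ref{equiv}: since $U_0\eqsim\widetilde U_0$, we have $u_1\in X\hookrightarrow C(\overline I;L_2(\Omega))$, so $u_1(0,\cdot)\in L_2(\Omega)$ is well-defined with $\|u_1(0,\cdot)\|_{L_2(\Omega)}\lesssim\|u_1\|_X\lesssim\|{\bf u}\|_{U_0}$. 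This is the decisive place where Proposition~\ref{equiv} enters, as the bare $U_0$-norm does not control $\partial_t u_1$.

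\emph{Invertibility.} Given $(f_1,f_2,w_0)$ in the target space, the first equation forces ${\bf u}_2=f_1-{\bf A}\nabla_{\bf x}u_1$; substituting this together with $\divv{\bf u}=\partial_t u_1+\divv_{\bf x}{\bf u}_2$ into the second equation yields
\[
\partial_t u_1-\divv_{\bf x}({\bf A}\nabla_{\bf x}u_1)+{\bf b}\cdot\nabla_{\bf x}u_1+cu_1=f_2-\divv_{\bf x}f_1+{\bf b}\cdot{\bf A}^{-1}f_1 .
\]
Read weakly against $v\in Y=L_2(I;H^1_D(\Omega))$ this is $(Bu_1)(v)=g(v)$, where $g(v):=\int_{I\times\Omega}(f_2+{\bf b}\cdot{\bf A}^{-1}f_1)\,v+f_1\cdot\nabla_{\bf x}v\,d{\bf x}\,dt\in Y'$ satisfies $\|g\|_{Y'}\lesssim\|f_1\|_{L_2(I\times\Omega)^d}+\|f_2\|_{L_2(I\times\Omega)}$. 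Theorem~\ref{thm1} then supplies a unique $u_1\in X$ with $(Bu_1,\gamma_0 u_1)=(g,w_0)$ and $\|u_1\|_X\lesssim\|f_1\|+\|f_2\|+\|w_0\|$; uniqueness here already delivers injectivity of $G$, since $G{\bf u}=0$ makes $u_1$ solve the homogeneous parabolic problem and hence $u_1=0$, ${\bf u}_2=-{\bf A}\nabla_{\bf x}u_1=0$. Setting ${\bf u}:=(u_1,\,f_1-{\bf A}\nabla_{\bf x}u_1)$, a distributional test against $\varphi\in C_c^\infty(I\times\Omega)$ combined with the weak equation (integrating by parts in time with no boundary contribution) identifies the distribution $\divv{\bf u}=\partial_t u_1+\divv_{\bf x}{\bf u}_2$ with the $L_2$-function $f_2+{\bf b}\cdot{\bf A}^{-1}{\bf u}_2-cu_1$; thus ${\bf u}\in U$ and the first two equations of $G{\bf u}=(f_1,f_2,w_0)$ hold, while the third holds by construction. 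Collecting the estimates gives $\|{\bf u}\|_{U_0}\lesssim\|(f_1,f_2,w_0)\|$, whence $G\in\Lis$.

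The step I expect to be the main obstacle is showing that the constructed ${\bf u}$ satisfies the \emph{natural} boundary condition ${\bf u}|_{I\times\Gamma_N}\cdot{\bf n}=0$, i.e.\ that it lies in the closed subspace $U_0$ and not merely in $U$. Having established ${\bf u}\in H(\divv;I\times\Omega)$, its lateral normal trace is a well-defined element of $H_{00}^{\frac12}(I\times\Gamma_N)'$, and I would test Green's identity on $I\times\Omega$ against functions $v\in H^1(I\times\Omega)$ vanishing on $\{0,T\}\times\Omega$ and on $I\times\Gamma_D$; such $v$ belong to $Y$, and after inserting the expressions for $\divv{\bf u}$ and ${\bf u}_2$ and invoking the weak equation, all interior terms cancel, leaving $\langle{\bf u}\cdot{\bf n},v\rangle_{I\times\Gamma_N}=0$, so the Neumann condition follows by density. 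The delicate points are the correct time integration by parts using the $C(\overline I;L_2(\Omega))$-traces of $u_1\in X$ and $v(0)=v(T)=0$, and the rigorous handling of the normal-trace operator restricted to the lateral face $I\times\Gamma_N$ of the space-time boundary.
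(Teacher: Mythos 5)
Your proposal is correct and follows essentially the same route as the paper: boundedness via Proposition~\ref{equiv} and $X\hookrightarrow C(\bar I;L_2(\Omega))$, reduction of the system to the weak parabolic problem of Theorem~\ref{thm1} after eliminating ${\bf u}_2$, and verification of $\divv{\bf u}\in L_2$ and the vanishing normal trace on $I\times\Gamma_N$ by testing against $v\in H^1(I\times\Omega)$ vanishing on the complementary part of the boundary. The only cosmetic difference is that the paper proves the coercivity bound $\|{\bf u}\|_U\lesssim\|G{\bf u}\|$ directly for all ${\bf u}\in U_0$ (which simultaneously yields injectivity), whereas you obtain injectivity from uniqueness in Theorem~\ref{thm1} and the bound from the constructed inverse; both hinge on the same identity $\int_{I\times\Omega}{\bf u}_2\cdot\nabla_{\bf x}v\,d{\bf x}\,dt=-\int_{I\times\Omega}v\,\divv_{\bf x}{\bf u}_2\,d{\bf x}\,dt$ for $v\in L_2(I;H^1_D(\Omega))$ supplied by Lemma~\ref{pre-equiv}.
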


\begin{remark}\label{rem:grad term}
Analogously, one can prove the same result for
$(u_1,{\bf u}_2) \mapsto ({\bf u}_2+{\bf A} \nabla_{\bf x} u_1,\divv {\bf u}+{\bf b} \cdot \nabla_{\bf x}  u_1 + c u_1 ,u_1(0,\cdot))$.
\end{remark}

\begin{proof} Boundedness of $G$ follows from the definition of $U_0$, and the fact that $X \hookrightarrow C(\bar{I};L_2(\Omega))$ (\cite[Chapter~1, Theorem~3.1]{185}) in combination with Proposition~\ref{equiv}.

As we have seen in the proof of Lemma~\ref{pre-equiv},
 for ${\bf u} \in U_0$ and $v \in L_2(I;H^1_D(\Omega))$, it holds that  $(-\nabla_{\bf x}'{\bf u}_2)(v)=\int_{I \times \Omega} v \divv_{\bf x} {\bf u}_2 \,d{\bf x}\,dt$.
From Theorem~\ref{thm1} we infer that
\begin{align*}
\|u_1\|_{L_2(I;H^1(\Omega))} \leq \|u_1\|_{X} 
\lesssim 
\|B u_1\|_{L_2(I;H^1_D(\Omega)')}+\|u_1(0,\cdot)\|_{L_2(\Omega)},
\end{align*}
where
\begin{align*}
&\|B u_1\|_{L_2(I;H^1_D(\Omega)')} = \|\partial_t u_1+\nabla_{\bf x}' {\bf A}\nabla_{\bf x} u_1 +{\bf b} \cdot \nabla_{\bf x} u_1 + c u_1\|_{L_2(I;H^1_D(\Omega)')}\\
& \le \|\partial_t u_1-\nabla_{\bf x}' {\bf u}_2  +{\bf b} \cdot \nabla_{\bf x} u_1 + c u_1\|_{L_2(I;H^1_D(\Omega)')}+\|\nabla_{\bf x}'({\bf u}_2+{\bf A}\nabla_{\bf x} u_1)\|_{L_2(I;H^1_D(\Omega)')}\\
& \lesssim \|\divv {\bf u} +{\bf b} \cdot \nabla_{\bf x} u_1 + c u_1\|_{L_2(I \times \Omega)}+\|{\bf u}_2+{\bf A}\nabla_{\bf x} u_1\|_{L_2(I\times \Omega)^d}\\
& \lesssim \|\divv {\bf u} -{\bf b} \cdot {\bf A}^{-1}{\bf u}_2 + c u_1\|_{L_2(I\times\Omega)}+\|{\bf u}_2+{\bf A}\nabla_{\bf x} u_1\|_{L_2(I\times \Omega)^d}.
\end{align*} 
From
\begin{align*}
\|{\bf u}_2\|_{L_2(I\times\Omega)^d} &\leq \|{\bf u}_2 +{\bf A} \nabla_{\bf x}u_1\|_{L_2(I\times\Omega)^d}+\|{\bf A}\nabla_{\bf x}u_1\|_{L_2(I\times\Omega)^d}\\
& \lesssim \|{\bf u}_2+{\bf A} \nabla_{\bf x}u_1\|_{L_2(I\times\Omega)^d}+\|u_1\|_{L_2(I;H^1(\Omega))},
\intertext{and}
\| \divv {\bf u} \|_{L_2(I\times\Omega)} &\!\lesssim \| \divv {\bf u}\!-\!{\bf b} \cdot {\bf A}^{-1}{\bf u}_2 \!+\! c u_1\|_{L_2(I\times\Omega)} \!+\!\|{\bf u}\|_{L_2(I \times \Omega)^{d+1}},
\end{align*}
we conclude that $\|{\bf u}\|_U \lesssim \|G{\bf u}\|_{L_2(I\times \Omega)^d \times L_2(I\times\Omega) \times L_2(\Omega)}$, and thus in particular that $G$ is injective. 

Given $({\bf q},h,u_0) \in L_2(I\times\Omega)^d \times L_2(I\times \Omega) \times L_2(\Omega)$, let $u_1 \in X$ be the solution of 
\begin{align*}
\left[\begin{array}{@{}c@{}} B \\ \gamma_0\end{array} \right]u_1=\left[\begin{array}{@{}c@{}} v \mapsto  \int_{I \times \Omega} (h+{\bf b}\cdot {\bf A}^{-1} {\bf q}) v +{\bf q} \cdot \nabla_{\bf x} v  \,d{\bf x}\,dt\\ u_0\end{array} \right] \in L_2(I;H^1_D(\Omega)') \times L_2(\Omega),
\end{align*}
so that for $v \in L_2(I;H^1_D(\Omega))$
\begin{align*}
\int_{I \times \Omega} \partial_t u_1\,v+{\bf A} \nabla_{\bf x} u_1\cdot \nabla_{\bf x} v+{\bf b}\cdot\nabla_{\bf x} u_1\, v + c\,u_1\,v \,d{\bf x}\,dt
\\
\quad=
\int_{I \times \Omega} (h+{\bf b}\cdot {\bf A}^{-1} {\bf q}) v +{\bf q} \cdot \nabla_{\bf x} v  \,d{\bf x}\,dt,
\end{align*}
and thus for ${\bf u}_2:={\bf q}-{\bf A} \nabla_{\bf x}u_1 \in L_2(I\times \Omega)^d$
\begin{align*}
\int_{I \times \Omega} \partial_t u_1\,v-{\bf u}_2 \cdot\nabla_{\bf x} v \,d{\bf x}\,dt=
\int_{I \times \Omega} \underbrace{(h+{\bf b}\cdot {\bf A}^{-1} {\bf u}_2- c u_1)}_{=:\tilde h \in L_2(I \times \Omega)}v \,d{\bf x}\,dt .
\end{align*}
For $v \in H^1(I\times \Omega)$ that vanish at $\partial (I \times \Omega) \setminus \overline{I \times \Gamma_N}$, one has $\int_{I \times \Omega} \partial_t u_1 \,v\,d{\bf x}\,dt$ $=$ $-\int_{I \times \Omega}u_1  \partial_t  v\,d{\bf x}\,dt$, and therefore $\int_{I \times \Omega} \partial_t u_1\,v-{\bf u}_2 \cdot\nabla_{\bf x} v \,d{\bf x}\,dt=-\int_{I \times \Omega} {\bf u}\cdot \nabla v\,d{\bf x}\,dt$, which shows $\divv {\bf u}=\tilde h$.
Moreover, for such $v$, it holds that
\begin{align*}
\int_{I \times \Gamma_N}  {\bf u}\cdot{\bf n}\,v\,ds&=
\int_{I \times \Omega} v \divv {\bf u}+{\bf u}\cdot \nabla v \,d{\bf x}\,dt=
\int_{I \times \Omega} v \tilde h+u_1 \partial_t v +{\bf u}_2\cdot \nabla_{\bf x} v \,d{\bf x}\,dt\\
&=
\int_{I \times \Omega} v \tilde h-(\partial_t u_1 \, v -{\bf u}_2\cdot \nabla_{\bf x} v) \,d{\bf x}\,dt=0,
\end{align*}
which proves that ${\bf u}|_{I \times \Gamma_N}\cdot {\bf n}=0$, and so ${\bf u} \in U_0$. 
We conclude that $G{\bf u}=({\bf q},h,u_0)$, i.e., $G$ is surjective, which completes the proof.
\end{proof}

Next, using Theorem~\ref{thm2}, we show that the well-posed standard variational formulation of the parabolic problem discussed in Subsections~\ref{sec:abstract}--\ref{S2ndorder}, thus with homogeneous Dirichlet datum $u_D=0$, has an equivalent formulation as a well-posed first-order system.
As a preparation, we note that any forcing term $g \in L_2(I;H_D^1(\Omega)')$ can (non-uniquely) be written in the form
\be \label{splitting}
g(v)=\int_{I \times \Omega} g_1 v +{\bf g}_2 \cdot \nabla_{\bf x} v \,d{\bf x}\,dt \quad \text{for all } v \in L_2(I;H^1_D(\Omega)),
\ee
for some $g_1 \in L_2(I;L_2(\Omega))$ and ${\bf g}_2 \in L_2(I;L_2(\Omega)^d)$.
Take, e.g., $g_1=w$ and ${\bf g}_2=\nabla_{\bf x} w$ with $w \in L_2(I;H^1_D(\Omega))$ being the Riesz lift of $g$ defined by
\be \label{lift}
\int_{I \times \Omega} w v+\nabla_{\bf x} w \cdot \nabla_{\bf x} v\,d{\bf x}\,dt =g(v) \quad \text{for all } v \in L_2(I;H^1_D(\Omega)).
\ee

\begin{proposition} \label{prop:Bu2least}
With a splitting of $g \in\ L_2(I;H^1_D(\Omega)')$ as in \eqref{splitting}, where $(g_1,{\bf g}_2) \in L_2(I;L_2(\Omega))\times L_2(I;L_2(\Omega)^d)$, and $u_0\in L_2(\Omega)$, it holds that $u_1\in X=L_2(I;H_D^1(\Omega)) \cap H^{1}(I;H_D^1(\Omega)')$ solves 
$(B u_1, \gamma_0 u_1)=(g,u_0)$ and  ${\bf u}_2=-{\bf A} \nabla_{\bf x} u_1+{\bf g}_2$
if and only if ${\bf u}=(u_1,{\bf u}_2) \in U_0$ solves
\begin{align*}
G {\bf u}=({\bf g}_2,  g_1 -{\bf b}\cdot {\bf A}^{-1} {\bf g}_2, u_0).
\end{align*}
\end{proposition}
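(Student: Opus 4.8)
The plan is to read off the three components of the target $G{\bf u}=({\bf g}_2,\,g_1-{\bf b}\cdot{\bf A}^{-1}{\bf g}_2,\,u_0)$ and match them against the two conditions on the left. The first component, ${\bf u}_2+{\bf A}\nabla_{\bf x}u_1={\bf g}_2$, is exactly the prescribed relation ${\bf u}_2=-{\bf A}\nabla_{\bf x}u_1+{\bf g}_2$, and the third, $u_1(0,\cdot)=u_0$, is exactly $\gamma_0 u_1=u_0$. Hence, once ${\bf u}_2=-{\bf A}\nabla_{\bf x}u_1+{\bf g}_2$ is imposed, the whole equivalence reduces to showing that $u_1\in X$ solving $Bu_1=g$ is the same as ${\bf u}=(u_1,{\bf u}_2)\in U_0$ satisfying the middle identity $\divv{\bf u}-{\bf b}\cdot{\bf A}^{-1}{\bf u}_2+cu_1=g_1-{\bf b}\cdot{\bf A}^{-1}{\bf g}_2$ in $L_2(I\times\Omega)$.

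The two principal tools are Proposition~\ref{equiv}, which identifies membership $u_1\in X$ with the $u_1$-slot of $U_0$ (so that $\partial_t u_1\in L_2(I;H^1_D(\Omega)')$ is automatic), and the weak integration-by-parts identity established in the proofs of Lemma~\ref{pre-equiv} and Theorem~\ref{thm2}: for ${\bf u}\in U_0$ and $v\in L_2(I;H^1_D(\Omega))$ one has $\int_{I\times\Omega}(\partial_t u_1\,v-{\bf u}_2\cdot\nabla_{\bf x}v)\,d{\bf x}\,dt=\int_{I\times\Omega}\divv{\bf u}\,v\,d{\bf x}\,dt$. Throughout I will use $\nabla_{\bf x}u_1={\bf A}^{-1}({\bf g}_2-{\bf u}_2)$ and hence ${\bf b}\cdot\nabla_{\bf x}u_1={\bf b}\cdot{\bf A}^{-1}({\bf g}_2-{\bf u}_2)$.

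For the forward direction, I substitute ${\bf A}\nabla_{\bf x}u_1={\bf g}_2-{\bf u}_2$ into the variational equation $Bu_1=g$; the term ${\bf g}_2\cdot\nabla_{\bf x}v$ cancels against the one coming from $g$, and after replacing ${\bf b}\cdot\nabla_{\bf x}u_1$ as above one is left with
\begin{align*}
\int_{I\times\Omega}\partial_t u_1\,v-{\bf u}_2\cdot\nabla_{\bf x}v\,d{\bf x}\,dt=\int_{I\times\Omega}\tilde h\,v\,d{\bf x}\,dt\quad\text{for all }v\in L_2(I;H^1_D(\Omega)),
\end{align*}
with $\tilde h:=g_1-{\bf b}\cdot{\bf A}^{-1}{\bf g}_2+{\bf b}\cdot{\bf A}^{-1}{\bf u}_2-cu_1\in L_2(I\times\Omega)$. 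This is precisely the identity reached in the surjectivity part of the proof of Theorem~\ref{thm2}; the very same density argument (testing against $v\in H^1(I\times\Omega)$ vanishing on $\partial(I\times\Omega)\setminus\overline{I\times\Gamma_N}$, integrating by parts in time) then yields $\divv{\bf u}=\tilde h\in L_2(I\times\Omega)$ together with ${\bf u}|_{I\times\Gamma_N}\cdot{\bf n}=0$, i.e.\ ${\bf u}\in U_0$, and rearranging $\divv{\bf u}=\tilde h$ gives the desired middle component.

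The reverse direction is the same computation run backwards: starting from ${\bf u}\in U_0$ and the middle identity, Proposition~\ref{equiv} gives $u_1\in X$, and inserting $\divv{\bf u}=\tilde h$ into the weak identity above and undoing the substitutions reconstructs $Bu_1(v)=\int_{I\times\Omega}g_1\,v+{\bf g}_2\cdot\nabla_{\bf x}v\,d{\bf x}\,dt=g(v)$. I expect the only genuine obstacle to be the forward passage from the scalar weak identity to full $U_0$-membership, namely the recovery of $\divv{\bf u}\in L_2(I\times\Omega)$ and of the vanishing Neumann trace; this is exactly the boundary and density step borrowed from the proof of Theorem~\ref{thm2}, while all remaining manipulations are routine algebraic cancellations.
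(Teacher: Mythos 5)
Your proposal is correct and follows essentially the same route as the paper's proof: reduce to the middle component via the substitution ${\bf A}\nabla_{\bf x}u_1={\bf g}_2-{\bf u}_2$, obtain the weak identity with $\tilde h$ (the paper's $\tilde g$), and then invoke exactly the two ingredients the paper uses — the density/integration-by-parts step from the last paragraph of the proof of Theorem~\ref{thm2} for the forward direction (yielding $\divv{\bf u}=\tilde h$ and the vanishing normal trace), and Proposition~\ref{equiv} together with the identity $\int_{I\times\Omega}\partial_t u_1\,v-{\bf u}_2\cdot\nabla_{\bf x}v\,d{\bf x}\,dt=\int_{I\times\Omega}v\,\divv{\bf u}\,d{\bf x}\,dt$ on $U_0$ for the converse. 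No gaps.
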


\begin{proof}
With ${\bf u}_2=-{\bf A} \nabla_{\bf x} u_1+{\bf g}_2$, i.e., $(G {\bf u})_1={\bf g}_2$, the equation $B u_1=g$, i.e.,
\begin{align*}
\int_{I \times \Omega} (\partial_t u_1 +{\bf b}\cdot \nabla_{\bf x} u_1 +c u_1) v +{\bf A} \nabla_{\bf x} u_1\cdot \nabla_{\bf x}v \,d{\bf x}\,dt=g(v) \quad (v \in L_2(I;H^1_D(\Omega))),
\end{align*}
is equivalent to
\begin{align} \label{eq:g_tilde}
\int_{I \times \Omega}  \partial_t u_1 v-{\bf u}_2 \cdot \nabla_{\bf x} v \,d{\bf x}\,dt=
 \int_{I \times \Omega} \underbrace{({\bf b}\cdot {\bf A}^{-1}({\bf u}_2-{\bf g}_2)-c u_1+g_1)}_{=:\tilde g \in L_2(I \times \Omega)} v\,d{\bf x}\,dt 
 \\ (v \in L_2(I;H^1_D(\Omega))).\notag
\end{align}
As we have seen in the last paragraph of the proof of Theorem~\ref{thm2}, \eqref{eq:g_tilde} implies $\divv {\bf u}=\tilde g$, i.e., $(G{\bf u})_2= g_1 -{\bf b}\cdot {\bf A}^{-1} {\bf g}_2$, and ${\bf u}|_{I \times \Gamma_N}=0$.

Conversely, let ${\bf u} \in U_0$ satisfy $G {\bf u}=({\bf g}_2,  g_1 -{\bf b}\cdot {\bf A}^{-1} {\bf g}_2, u_0)$. Then, Proposition~\ref{equiv} shows that $u_1 \in X$. Since $\divv {\bf u}=\tilde g$, it remains to show that 
\begin{align*}
 \int_{I \times \Omega}  \partial_t u_1 v-{\bf u}_2 \cdot \nabla_{\bf x} v \,d{\bf x}\,dt=\int_{I \times \Omega}v \divv {\bf u} \,d{\bf x}\,dt \quad(v \in L_2(I;H^1_D(\Omega))).
\end{align*}
The latter relation is already valid for arbitrary ${\bf u} \in H_{0,I\times \Gamma_N}(\divv;I\times \Omega)$ and $v \in L_2(I;H^1_D(\Omega))$. Indeed, for smooth ${\bf u}$ and $v$ in these spaces, it follows by integration by parts, and so by using Lemma~\ref{pre-equiv}, it follows by the density of the sets of those functions in these spaces.
\end{proof}

When $f \in L_2(I \times \Omega)$ and $\phi=0$ in \eqref{2nd}, one has $g=f \in  L_2(I \times \Omega)$ and one obviously  takes $(g_1,{\bf g}_2)=(g,0)$ in the previous proposition.
For $g \in L_2(I;H^1_D(\Omega)')\setminus L_2(I \times \Omega)$ (i.e., $f \not\in L_2(I\times\Omega)$ and/or $\phi \neq 0$)
 generally the splitting of $g$ requires solving~\eqref{lift}.
For the case that $\Gamma_N=\partial\Omega$, an alternative approach for inhomogeneous Neumann datum $\phi \neq 0$ will be presented in Theorem~\ref{thmNeu}.

\begin{remark} \label{litt} Theorem~\ref{thm2} extends the crucial result from \cite{75.257}. 
For the case that ${\bf A}=\identity$, ${\bf b}=0=c$, and $\Gamma_D=\partial\Omega$, there it was shown that the harmlessly different operator $\widetilde{G}\colon {\bf u} \mapsto G(u,-{\bf u}_2)\colon \widetilde{U}_0 \mapsto L_2(I;L_2(\Omega)^d) \times L_2(I;L_2(\Omega)) \times L_2(\Omega)$  is 
in $\Lis(\widetilde{U}_0,\ran \widetilde{G})$, and that $\ran \widetilde{G} \supseteq \{{\bf 0}\} \times L_2(I;L_2(\Omega)) \times L_2(\Omega)$. We showed that $G$, and thus $\widetilde{G}$, is also surjective.
Notice that for well-posedness of a least-squares formulation, this surjectivity is not required. Indeed, bounded invertibility of the operator between its domain and its range is equivalent to boundedness and coercivity of the bilinear form corresponding to the Euler--Lagrange equations resulting from the least-squares functional.

Our motivation to replace ${\bf u}_2$ by $-{\bf u}_2$ is that $\partial_t u_1+\divv_{\bf x} {\bf u}_2$ is the divergence of the vector field ${\bf u}\colon I \times \Omega \rightarrow \R^{d+1}$. 
When imposing, as we do, that the latter divergence is in $L_2(I \times \Omega)$, we know that ${\bf u}$ has a normal trace at $\partial (I \times \Omega)$, which allowed an easy extension to homogeneous Neumann boundary conditions.
Furthermore, in Proposition~\ref{equiv}, we made the observation that $\widetilde{U}_0 \eqsim U_0$, which freed ourselves from the dual norm which is part of the definition of $\widetilde U_0$.
This will also play an essential role in the proofs of Theorem~\ref{thmDir} and \ref{thmNeu} dealing with inhomogeneous boundary conditions, and that of Theorem~\ref{thm:error_convergence} concerning plain convergence of a standard adaptive algorithm.
\end{remark}

\subsection{Inhomogeneous boundary conditions} \label{sec:inhom}

We extend the first-order formulation to cover both inhomogeneous (pure) Dirichlet boundary conditions and inhomogeneous (pure) Neumann boundary conditions, the latter now without the need to compute a Riesz lift of the boundary datum.

The following lemma is essentially a slight generalization of \cite[Theorem~2.1]{249.96}. 
Thinking of $S$ as being a trace operator, it shows how to append (essential) inhomogeneous boundary conditions to an equation that is well-posed for the corresponding homogeneous boundary conditions.

\begin{lemma} \label{lem:bi}
Let $\mathcal{X}$ and $\mathcal{Y}_2$ be Banach spaces, and $\mathcal{Y}_1$ be a normed linear space.
Let $S \in \cL(\mathcal{X},\mathcal{Y}_2)$ be surjective, let $F \in \cL(\mathcal{X},\mathcal{Y}_1)$ be such that 
with $\mathcal{X}_0:=\{x \in \mathcal{X}\colon S x=0\}$,  $F|_{\mathcal{X}_0} \in \Lis(\mathcal{X}_0,\mathcal{Y}_1)$.
Then, $\left[\begin{array}{@{}c@{}} F \\ S\end{array} \right]\in \Lis\big(\mathcal{X},\mathcal{Y}_1\times \mathcal{Y}_2\big)$.
\end{lemma}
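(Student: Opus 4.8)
The plan is to show directly that the operator $T:=\left[\begin{array}{@{}c@{}} F \\ S\end{array} \right]\colon \mathcal{X}\to \mathcal{Y}_1\times\mathcal{Y}_2$ is bounded, injective, and surjective with a uniform norm bound on preimages, from which membership in $\Lis(\mathcal{X},\mathcal{Y}_1\times\mathcal{Y}_2)$ follows. Boundedness of $T$ is immediate: $\|Tx\|_{\mathcal{Y}_1\times\mathcal{Y}_2}^2=\|Fx\|_{\mathcal{Y}_1}^2+\|Sx\|_{\mathcal{Y}_2}^2\lesssim\|x\|_{\mathcal{X}}^2$ by boundedness of $F$ and $S$. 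For injectivity, suppose $Tx=0$; then $Sx=0$, so $x\in\mathcal{X}_0$, and $Fx=0$; since $F|_{\mathcal{X}_0}$ is injective (being in $\Lis$), we get $x=0$.

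The crux is surjectivity together with a uniform bound on a preimage. Since $S$ is a bounded surjection between the Banach spaces $\mathcal{X}$ and $\mathcal{Y}_2$, the open mapping theorem furnishes a constant $C>0$ such that for every $y_2\in\mathcal{Y}_2$ there exists $\bar{x}\in\mathcal{X}$ with $S\bar{x}=y_2$ and $\|\bar{x}\|_{\mathcal{X}}\le C\|y_2\|_{\mathcal{Y}_2}$. (Equivalently: $\mathcal{X}_0=\ker S$ is closed, so $S$ descends to a continuous bijection $\mathcal{X}/\mathcal{X}_0\to\mathcal{Y}_2$ whose inverse is bounded by the bounded-inverse theorem.) This is the one place where the Banach hypotheses on $\mathcal{X}$ and $\mathcal{Y}_2$ are used; note that $\mathcal{Y}_1$ may remain merely a normed space, since the open mapping theorem is never applied there and the inverse $(F|_{\mathcal{X}_0})^{-1}$ is supplied by hypothesis.

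Given an arbitrary $(y_1,y_2)\in\mathcal{Y}_1\times\mathcal{Y}_2$, I first select such a $\bar{x}$, and then correct it within $\mathcal{X}_0$: setting $x_0:=(F|_{\mathcal{X}_0})^{-1}(y_1-F\bar{x})\in\mathcal{X}_0$ and $x:=\bar{x}+x_0$, I obtain $Sx=S\bar{x}=y_2$ (as $x_0\in\ker S$) and $Fx=F\bar{x}+(y_1-F\bar{x})=y_1$, so $Tx=(y_1,y_2)$. The norm bound follows by chaining estimates: $\|x_0\|_{\mathcal{X}}\lesssim\|y_1-F\bar{x}\|_{\mathcal{Y}_1}\lesssim\|y_1\|_{\mathcal{Y}_1}+\|\bar{x}\|_{\mathcal{X}}\lesssim\|y_1\|_{\mathcal{Y}_1}+\|y_2\|_{\mathcal{Y}_2}$, whence $\|x\|_{\mathcal{X}}\lesssim\|(y_1,y_2)\|_{\mathcal{Y}_1\times\mathcal{Y}_2}$, with the hidden constants depending only on $\|F\|$, $C$, and $\|(F|_{\mathcal{X}_0})^{-1}\|$.

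Combining the three facts finishes the argument: injectivity makes the preimage $x$ just constructed the unique one, so $T^{-1}$ exists as a linear map and the preimage bound shows it is bounded; together with boundedness of $T$ this yields $T\in\Lis(\mathcal{X},\mathcal{Y}_1\times\mathcal{Y}_2)$. I expect the only genuine subtlety to be the invocation of the open mapping theorem to produce the uniformly norm-bounded (and generally nonlinear, or equivalently a bounded linear lifting through the quotient) right inverse of $S$; the rest is a routine correction step and triangle-inequality bookkeeping.
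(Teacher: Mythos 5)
Your proof is correct and follows essentially the same route as the paper's: both invoke the open mapping theorem to obtain a norm-controlled lift of $y_2$ through $S$ and then correct within $\ker S$ using $(F|_{\mathcal{X}_0})^{-1}$. The only (cosmetic) difference is that the paper first establishes the two-sided estimate $\|x\|_{\mathcal{X}}\eqsim\|Fx\|_{\mathcal{Y}_1}+\|Sx\|_{\mathcal{Y}_2}$ for all $x$ via the decomposition $x=ESx+(\identity-ES)x$ and then proves surjectivity, whereas you prove injectivity qualitatively and extract the bound on the inverse from the explicit preimage construction; both are valid.
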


\begin{proof} Knowing that $S$ maps the open unit ball of $\mathcal{X}$ onto an open neighborhood of $0 \in \mathcal{Y}_2$ (according to the open mapping theorem),
there exists a constant $r>0$ such that for any $y \in \mathcal{Y}_2$ there exists an $x \in \mathcal{X}$ with $Sx=y$ and $\|x\|_{\mathcal{X}} \leq r \|y\|_{\mathcal{Y}_2}$.
Denoting this mapping $y\mapsto x$ by $E$, from $\ran(\identity - ES) \subseteq\mathcal{X}_0$ we have for $x \in \mathcal{X}$
\begin{align*}
\|x\|_{\mathcal{X}} &\leq
\|E S x\|_{\mathcal{X}}+\|(\identity-E S) x\|_{\mathcal{X}}
\lesssim \|S x \|_{\mathcal{Y}_2}+\|F(\identity-E S) x\|_{\mathcal{Y}_1}\\
&\leq \|S x \|_{\mathcal{Y}_2}+\|F x\|_{\mathcal{Y}_1}+\|FE S x\|_{\mathcal{Y}_1}
\lesssim \|S x \|_{\mathcal{Y}_2}+\|F x\|_{\mathcal{Y}_1} \lesssim \|x\|_{\mathcal{X}}.
\end{align*}
Given $(y_1,y_2) \in \mathcal{Y}_1\times \mathcal{Y}_2$, let $x_2 \in \mathcal{X}$ be such that $S x_2 =y_2$, and $x_0 \in \mathcal{X}_0$ be such that $F x_0=y_1-F x_2$.
Then, $\left[\begin{array}{@{}c@{}} F \\ S\end{array} \right] (x_0+x_2)=\left[\begin{array}{@{}c@{}} y_1 \\ y_2\end{array} \right]$ showing that $\left[\begin{array}{@{}c@{}} F \\ S\end{array} \right]$ is surjective, which completes the proof.
\end{proof}%

In combination with Theorem~\ref{thm2}, Lemma~\ref{lem:bi} allows to prove the following theorem for  inhomogeneous pure Dirichlet boundary conditions.
\begin{theorem}[Inhomogeneous (pure) Dirichlet] \label{thmDir} 
It holds that
\begin{align*}
&G_D\colon {\bf u}=(u_1,{\bf u}_2) \mapsto ({\bf u}_2+{\bf A} \nabla_{\bf x} u_1,\divv {\bf u}-{\bf b} \cdot {\bf A}^{-1}  {\bf u}_2 + c u_1 ,u_1(0,\cdot), u_1|_{I \times \partial\Omega})\\
 &\in \Lis\Big(U,L_2(I\times\Omega)^d \times L_2(I\times\Omega) \times L_2(\Omega) \times \big(L_2(I;H^{\frac12}(\partial\Omega))\cap H^{\frac14}(I;L_2(\partial\Omega))\big)\Big).
\end{align*}
\end{theorem}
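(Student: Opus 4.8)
The plan is to apply Lemma~\ref{lem:bi} with $\mathcal{X}:=U$, with the normed space $\mathcal{Y}_1:=L_2(I\times\Omega)^d \times L_2(I\times\Omega) \times L_2(\Omega)$, with the Banach space $\mathcal{Y}_2:=L_2(I;H^{\frac12}(\partial\Omega))\cap H^{\frac14}(I;L_2(\partial\Omega))$, with $F$ equal to the first three components of $G_D$ (which is precisely the operator $G$ from Theorem~\ref{thm2}), and with $S\colon {\bf u}\mapsto u_1|_{I \times \partial\Omega}$ the lateral Dirichlet trace. Since we are in the pure Dirichlet setting $\Gamma_D=\partial\Omega$, $\Gamma_N=\emptyset$, we have $H^1_D(\Omega)=H^1_0(\Omega)$ and the normal-trace constraint defining $U_0$ is vacuous, so $\mathcal{X}_0:=\ker S = \{{\bf u}\in U : u_1|_{I \times \partial\Omega}=0\} = U_0$. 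By Theorem~\ref{thm2}, $F|_{\mathcal{X}_0}=G\in\Lis(U_0,\mathcal{Y}_1)$, which is exactly the invertibility hypothesis of Lemma~\ref{lem:bi}. It therefore remains to verify that $F\in\cL(U,\mathcal{Y}_1)$ and $S\in\cL(U,\mathcal{Y}_2)$, and that $S$ is surjective.

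The technical heart is the lateral trace theorem: that $S\in\cL(U,\mathcal{Y}_2)$ is bounded and surjective. For boundedness, the spatial part $u_1|_{I \times \partial\Omega}\in L_2(I;H^{\frac12}(\partial\Omega))$ follows from the classical trace theorem applied pointwise in $t\in I$ together with $u_1\in L_2(I;H^1(\Omega))$. The fractional time regularity $u_1|_{I \times \partial\Omega}\in H^{\frac14}(I;L_2(\partial\Omega))$ is more delicate: it rests on the observation that in the pure Dirichlet case Lemma~\ref{pre-equiv} applies to all of $U$ (as $\Gamma_N=\emptyset$ renders $H_{0,I \times \Gamma_N}(\divv;I \times \Omega)=H(\divv;I \times \Omega)$), yielding $\partial_t u_1\in L_2(I;H^1_0(\Omega)')$, after which one identifies $\mathcal{Y}_2$ as the lateral trace space of the anisotropic space carrying the $U$-norm and invokes the corresponding anisotropic trace theorem. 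For surjectivity I would construct a bounded right inverse (extension) $E\colon \mathcal{Y}_2\to U$: given boundary data $g$, lift it to some $u_1$ with the prescribed lateral trace and choose ${\bf u}_2$ so that $\divv {\bf u}\in L_2(I\times\Omega)$, producing ${\bf u}=Eg\in U$ with $S(Eg)=g$.

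The remaining subtlety concerns the initial-trace component of $F$, namely that $u_1(0,\cdot)\in L_2(\Omega)$ with norm $\lesssim\|{\bf u}\|_U$ for \emph{all} ${\bf u}\in U$. Here the usual energy argument breaks down: for general ${\bf u}\in U$ the function $u_1$ does not vanish on $I \times \partial\Omega$, so $u_1(t)\in H^1(\Omega)\setminus H^1_0(\Omega)$ cannot be paired with $\partial_t u_1(t)\in H^1_0(\Omega)'$ and $\|u_1(t)\|_{L_2(\Omega)}^2$ cannot be differentiated in the standard way. I would circumvent this by arranging the extension $E$ so that its output has vanishing initial trace, $(Eg)_1(0,\cdot)=0$. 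Then, for ${\bf u}\in U$, the difference ${\bf v}:={\bf u}-E(S{\bf u})$ satisfies $S{\bf v}=0$, hence ${\bf v}\in U_0$, so by Proposition~\ref{equiv} one has $v_1\in X\hookrightarrow C(\bar I;L_2(\Omega))$ and $v_1(0,\cdot)\in L_2(\Omega)$ with norm $\lesssim\|{\bf v}\|_U\lesssim\|{\bf u}\|_U$. Consequently $u_1(0,\cdot)=v_1(0,\cdot)\in L_2(\Omega)$, which at once gives a consistent meaning to the initial trace and the required bound; the other two components of $F$ are bounded on $U$ directly from the definition of the $U$-norm. With these ingredients in hand, Lemma~\ref{lem:bi} yields $G_D\in\Lis(U,\mathcal{Y}_1\times\mathcal{Y}_2)$.

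I anticipate that the principal obstacle is the lateral trace theorem of the second paragraph, that is, pinning down the sharp time regularity $H^{\frac14}(I;L_2(\partial\Omega))$ and constructing a bounded extension that is both norm-controlled in $U$ and has vanishing initial trace. Indeed, the naive interpolation between $L_2(I;H^1(\Omega))$ and $H^1(I;H^1_0(\Omega)')$ only places the $t=0$ trace in the interpolation space $(H^1(\Omega),H^1_0(\Omega)')_{1/2,2}$, which contains $L_2(\Omega)$ but is in general strictly larger, and therefore does not by itself deliver the sharp trace and initial-value statements; the reduction to $U_0$ above is what repairs this.
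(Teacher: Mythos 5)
Your proposal follows the same architecture as the paper's proof: apply Lemma~\ref{lem:bi} with $F=G$ and $S\colon{\bf u}\mapsto u_1|_{I\times\partial\Omega}$, observe that $\ker S=U_0$ in the pure Dirichlet case so that Theorem~\ref{thm2} supplies the invertibility hypothesis, and reduce everything to boundedness and surjectivity of the lateral trace. The two places where real analysis is needed are exactly the ones you flag as obstacles, but you leave them as placeholders, whereas the paper resolves them by citation: boundedness of $S$ follows from Lemma~\ref{pre-equiv} with $\Gamma_N=\emptyset$ (giving \eqref{eq:boundu1}), the interpolation embedding \eqref{eq:Hhalf}, and the anisotropic trace theorem of Lions--Magenes \cite[Chapter~4, Theorem~2.1]{185.1}; surjectivity \eqref{surjective} follows from Costabel's solvability result \cite[Thm.~2.9]{45.491} for the heat equation with Dirichlet data exactly in $L_2(I;H^{\frac12}(\partial\Omega))\cap H^{\frac14}(I;L_2(\partial\Omega))$, after which setting ${\bf u}_2=-\nabla_{\bf x}u_1$ forces $\divv{\bf u}=h\in L_2(I\times\Omega)$ and hence ${\bf u}\in U$. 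Without these (or equivalent) inputs the proof is not complete, so you should regard them as the substance still to be supplied rather than routine verifications.

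The one point where you genuinely diverge is the initial trace, and there your premise is not right. The paper proves $u_1(0,\cdot)\in L_2(\Omega)$ directly on all of $U$: Lemma~\ref{pre-equiv} gives $\partial_t u_1\in L_2(I;H^{-1}(\Omega))$ for every ${\bf u}\in U$ (no lateral boundary condition needed), and the identity $[H^{-1}(\Omega),H^1(\Omega)]_{\frac12}=L_2(\Omega)$ for Lipschitz domains (see \cite[pages~480 \& 494]{63}) yields the embedding \eqref{eq:Hhalf} into $C(\bar I;L_2(\Omega))\cap H^{\frac12}(I;L_2(\Omega))$; your claim that this interpolation space is ``in general strictly larger'' than $L_2(\Omega)$ is what the cited identity refutes. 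Moreover, your detour through $U_0$ via an extension with vanishing initial trace does not actually save you this interpolation step: the $H^{\frac12}(I;L_2(\Omega))$ regularity delivered by the same embedding is indispensable as input to the lateral trace theorem, and once you have it, continuity into $L_2(\Omega)$ at $t=0$ comes for free. Your workaround also adds the nontrivial burden of producing a bounded right inverse whose first component has vanishing initial trace in the pointwise $L_2(\Omega)$ sense, which is an extra property to verify on top of the extension itself. So the detour is avoidable and costs more than the direct argument.
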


\begin{proof} 
An application of Lemma~\ref{pre-equiv} for $\Gamma_N=\emptyset$ shows that for ${\bf u}=(u_1,{\bf u}_2) \in U$, 
\be \label{eq:boundu1}
\|u_1\|_{L_2(I;H^1(\Omega)) \cap H^1(I;H^{-1}(\Omega))} \lesssim \|{\bf u}\|_{U}.
\ee
We will combine this observation with the fact that
\begin{align}\label{eq:Hhalf}
L_2(I;H^1(\Omega)) \cap H^1(I;H^{-1}(\Omega)) \hookrightarrow C(\bar{I};L_2(\Omega)) \cap
H^{\frac12}(I;L_2(\Omega)),
\end{align}
which follows from $[H^{-1}(\Omega),H^1(\Omega)]_{\frac12}=L_2(\Omega)$, see, e.g.~\cite[pages~480 \& 494]{63}.
As shown in \cite[Chapter~4, Theorem~2.1]{185.1},
\begin{align*}
u_1 \mapsto u_1|_{I \times \partial\Omega} \in \cL\Big(L_2(I;H^1(\Omega)) \cap H^{\frac12}(I;L_2(\Omega)),L_2(I;H^{\frac12}(\partial\Omega))\cap H^{\frac14}(I;L_2(\partial\Omega))\Big).
\end{align*}
Together with \eqref{eq:boundu1}--\eqref{eq:Hhalf}, it shows that $G_D$ is bounded.

Since in the current case of $\Gamma_D=\partial\Omega$, we have $\{{\bf u} \in U \colon u_1|_{I \times \partial\Omega}=0\}=U_0$, knowing the result of Theorem~\ref{thm2}, Lemma~\ref{lem:bi} shows that the proof will be completed once we have shown that
\be \label{surjective}
U \rightarrow L_2(I;H^{\frac12}(\partial\Omega)) \cap H^{\frac14}(I;L_2(\partial\Omega))\colon {\bf u} \mapsto u_D:=u_1|_{I \times \partial\Omega} \text{ is surjective.}
\ee
As shown in \cite[Thm.~2.9]{45.491}, the mapping
\begin{align*}
&u_1 \mapsto (h,u_D):=\Big(v \mapsto  \int_{I \times \Omega} \partial_t u_1 \,v +\nabla_{\bf x} u_1 \cdot \nabla_{\bf x} v \,d{\bf x}\,dt, u_1|_{I \times \partial \Omega}\Big) \\
&\in\Lis\Big(L_2(I;H^1(\Omega)) \cap H^{\frac12}_{00,\{0\}}(I;L_2(\Omega)), \\
&\quad\big(L_2(I;H_0^1(\Omega)) \cap H^{\frac12}_{00,\{T\}}(I;L_2(\Omega))\big)' \times L_2(I;H^{\frac12}(\partial\Omega)) \cap H^{\frac14}(I;L_2(\partial\Omega))\Big),
\end{align*}
where, with $H^1_{0,\{0\}}(I):= \{w \in H^1(I)\colon w(0)=0\}$, 
$H^{\frac12}_{00,\{0\}}(I):=[L_2(I),H^1_{0,\{0\}}(I)]_{\frac12}$, with a similar definition of $H^{\frac12}_{00,\{T\}}(I)$.
For given $h$ and  $u_D$, the corresponding $u_1$ is in $L_2(I;H^1(\Omega))$.
Taking $h \in L_2(I;L_2(\Omega))$ (e.g., $h=0$) and ${\bf u}_2 =-\nabla_{\bf x} u_1 \in L_2(I\times \Omega)^d$, 
from $\int_{I \times \Omega} \partial_t u_1 \,v - {\bf u}_2 \cdot \nabla_{\bf x} v \,d{\bf x}\,dt =\int_{I \times \Omega} h v \,d{\bf x}\,dt$ for
$v\in {\mathcal D}(I\times\Omega)\subset L_2(I;H_0^1(\Omega)) \cap H^{\frac12}_{00,\{T\}}(I;L_2(\Omega))$,
 it follows that 
$\divv {\bf u} = h\in L_2(I \times \Omega)$, i.e., \eqref{surjective} is valid.
\end{proof}

Using Theorem~\ref{thmDir}, we formulate the parabolic problem with inhomogeneous pure Dirichlet boundary conditions as a well-posed first-order system.
Let $(g_1,{\bf g}_2) \in L_2(I;L_2(\Omega))\times L_2(I;L_2(\Omega)^d)$, $u_0 \in L_2(\Omega)$, and $u_D \in L_2(I;H^{\frac12}(\partial\Omega))\cap H^{\frac14}(I;L_2(\partial\Omega))$, and set $g:= v \mapsto\int_{I \times \Omega} g_1 v +{\bf g}_2 \cdot \nabla_{\bf x} v \,d{\bf x}\,dt \in L_2(I;H^{-1}(\Omega))$.
Then the solution ${\bf u}=(u_1,{\bf u}_2) \in U$ of 
\begin{align}\label{eq:G_D_equation}
G_D{\bf u}=({\bf g}_2,g_1-{\bf b}\cdot {\bf A}^{-1}{\bf g}_2,u_0,u_D),
\end{align}
satisfies 
\begin{align*}
(Bu_1,\gamma_0 u_1,{u_1}|_{I \times \partial\Omega})=(g,u_0,u_D),
\end{align*}
i.e., $u_1$ satisfies  the parabolic PDE in standard variational form and both the initial and Dirichlet boundary condition.
 Indeed, knowing ${\bf u}_2+{\bf A} \nabla_{\bf x} u_1={\bf g}_2$, the second 
 equation in \eqref{eq:G_D_equation} is equivalent to 
 $\int_{I \times \Omega} (\partial_t u_1+{\bf b}\cdot \nabla_{\bf x} u_1 +c u_1)v+{\bf A} \nabla_{\bf x} u_1\cdot \nabla_{\bf x}  v \,d{\bf x} \,d t=g(v)$ for all $v \in L_2(I;H^1_0(\Omega))$.
 \medskip
  

Analogously to the case of inhomogeneous pure Dirichlet boundary conditions, the
combination of Theorem~\ref{thm2} and Lemma~\ref{lem:bi} allows to prove the following theorem for inhomogeneous pure Neumann boundary conditions.

\begin{theorem}[Inhomogeneous (pure) Neumann] \label{thmNeu} 
It holds that
\begin{align*}
&G_N\colon {\bf u} =(u_1,{\bf u}_2) \mapsto ({\bf u}_2+ {\bf A}\nabla_{\bf x} u_1, \divv {\bf u} -{\bf b}\cdot{\bf A}^{-1}{\bf u}_2 + c u_1,u_1(0,\cdot), {\bf u}|_{I \times \partial\Omega}\cdot {\bf n})
\\
 &\in \Lis\Big(U,L_2(I\times\Omega)^d \times L_2(I\times\Omega) \times L_2(\Omega) \times 
 \big(L_2(I;H^{\frac12}(\partial\Omega))\cap H^{\frac14}(I;L_2(\partial\Omega))\big)'
 \Big).
\end{align*}
\end{theorem}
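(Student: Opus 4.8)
The plan is to prove the claim exactly as Theorem~\ref{thmDir}, i.e.\ by an application of Lemma~\ref{lem:bi}, now taking $\mathcal X:=U$, $\mathcal Y_1:=L_2(I\times\Omega)^d\times L_2(I\times\Omega)\times L_2(\Omega)$, and $\mathcal Y_2:=\Lambda'$ with $\Lambda:=L_2(I;H^{\frac12}(\partial\Omega))\cap H^{\frac14}(I;L_2(\partial\Omega))$, letting $F$ be the map consisting of the first three components of $G_N$ and $S\colon{\bf u}\mapsto{\bf u}|_{I\times\partial\Omega}\cdot{\bf n}$ the normal trace. Since we are in the pure Neumann case $\Gamma_N=\partial\Omega$ (so $\Gamma_D=\emptyset$ and $H^1_D(\Omega)=H^1(\Omega)$), on the lateral boundary ${\bf u}\cdot{\bf n}={\bf u}_2\cdot{\bf n}_{\bf x}$, whence the kernel $\mathcal X_0:=\{{\bf u}\in U\colon S{\bf u}=0\}$ coincides with $U_0$. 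The operator $F$ is then precisely the operator $G$ of Theorem~\ref{thm2}, so that $F|_{\mathcal X_0}=G|_{U_0}\in\Lis(U_0,\mathcal Y_1)$ is supplied directly by that theorem, while boundedness of $F$ follows as in Theorem~\ref{thmDir}: the first two components are trivially bounded on $U$, and for $u_1(0,\cdot)$ one combines \eqref{eq:boundu1} with $L_2(I;H^1(\Omega))\cap H^1(I;H^{-1}(\Omega))\hookrightarrow C(\bar I;L_2(\Omega))$. It thus remains to verify that $S\in\cL(U,\Lambda')$ and that $S$ is surjective.

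For boundedness of $S$ I would pair $S{\bf u}$ with $w\in\Lambda$ by choosing, via the surjective anisotropic trace theorem \cite[Chapter~4, Theorem~2.1]{185.1}, an extension $v\in L_2(I;H^1(\Omega))\cap H^{\frac12}(I;L_2(\Omega))$ with $v|_{I\times\partial\Omega}=w$ and $\|v\|\lesssim\|w\|_\Lambda$, and then apply the space-time divergence theorem. Writing $\divv_{\bf x}{\bf u}_2=\divv{\bf u}-\partial_t u_1$ and integrating the time derivative by parts gives, modulo temporal boundary contributions at $t=0,T$, the identity $\langle S{\bf u},w\rangle=\int_{I\times\Omega}v\,\divv{\bf u}+{\bf u}_2\cdot\nabla_{\bf x} v+u_1\,\partial_t v\,d{\bf x}\,dt$, which is bounded by $\|{\bf u}\|_U\,\|v\|\lesssim\|{\bf u}\|_U\,\|w\|_\Lambda$ using $u_1\in C(\bar I;L_2(\Omega))\cap H^{\frac12}(I;L_2(\Omega))$ (again from \eqref{eq:boundu1}--\eqref{eq:Hhalf}). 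The delicate point, to be handled as in the proof of Lemma~\ref{pre-equiv}, is that $H^{\frac12}(I;L_2(\Omega))$ is the borderline case in which temporal traces need not exist, so the endpoint terms and the very definition of the pairing must be justified by density of smooth fields, for which $S{\bf u}$ is the classical normal trace. Note that landing in the finer space $\Lambda'$, rather than in the rougher generic $H(\divv;I\times\Omega)$ normal-trace space, genuinely uses the full $U$-regularity of ${\bf u}$, and not only ${\bf u}_2\in L_2(I\times\Omega)^d$.

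The crux is surjectivity of $S$ onto $\Lambda'$. I would first observe that a static, slice-wise spatial lift (e.g.\ $u_1=0$ and ${\bf u}_2=\nabla_{\bf x}\phi$ with $\phi$ solving a spatial Neumann problem for a.e.\ $t$) only realises data in $L_2(I;H^{-\frac12}(\partial\Omega))$, a proper subspace of $\Lambda'$; the time-rough part of $\Lambda'$ can only be reached through genuine temporal regularity of $u_1$. Accordingly, given $\psi\in\Lambda'$ I would solve an auxiliary parabolic Neumann problem, the Neumann counterpart of the Dirichlet well-posedness result \cite[Thm.~2.9]{45.491} used in Theorem~\ref{thmDir}, producing $u_1$ in the appropriate space-time energy space with $\partial_t u_1-\Delta_{\bf x} u_1+u_1=0$ weakly, $u_1(0,\cdot)=0$, and Neumann datum chosen so that $-\nabla_{\bf x} u_1\cdot{\bf n}_{\bf x}|_{I\times\partial\Omega}=\psi$, and then set ${\bf u}:=(u_1,-\nabla_{\bf x} u_1)$. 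By construction $\divv{\bf u}=\partial_t u_1-\Delta_{\bf x} u_1=-u_1\in L_2(I\times\Omega)$, so ${\bf u}\in U$, and testing against functions not vanishing on $I\times\partial\Omega$ identifies $S{\bf u}=\psi$, exactly as in the step \eqref{surjective} of Theorem~\ref{thmDir}. I expect the main obstacle to be establishing, or locating a citable form of, this Neumann well-posedness result with datum in the anisotropic dual space $\Lambda'$; once it is available, Lemma~\ref{lem:bi} immediately yields $G_N\in\Lis(U,\mathcal Y_1\times\mathcal Y_2)$.
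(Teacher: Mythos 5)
Your proposal follows essentially the same route as the paper: Lemma~\ref{lem:bi} with kernel $U_0$ and Theorem~\ref{thm2} for the first three components, boundedness of the normal trace via an anisotropic extension of the boundary test function combined with the space-time integration-by-parts identity, and surjectivity via a parabolic Neumann lift. The two points you flag as delicate are settled in the paper exactly as you anticipate: the extension is glued (with a temporal cutoff) from one vanishing near $t=0$ and one vanishing near $t=T$ so as to land in $L_2(I;H^1(\Omega))\cap H^{\frac12}_{00}(I;L_2(\Omega))$, which makes the pairing with $\partial_t u_1$ bounded by interpolation together with a Meyers--Serrin density argument in $U$, and the Neumann well-posedness result with datum in the anisotropic dual space is supplied by \cite[Corollary~3.17]{45.491}.
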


\begin{proof}
Clearly, the first two components of $G_N$ are continuous. 
Recall from \eqref{eq:boundu1}--\eqref{eq:Hhalf} that also the third one is bounded, and that $\|u_1\|_{H^{\frac12}(I;L_2(\Omega))} \lesssim \|{\bf u}\|_U$.
To see boundedness of the fourth one, we first remark that for smooth ${\bf u}$ and $v$ on $I \times \Omega$, integration by parts shows that
\be \label{3}
\int_{I \times \partial\Omega} {\bf u} \cdot {\bf n} \,v \,d {\bf s} =
\int_{I \times \Omega} {\bf u}_2 \cdot \nabla_{\bf x} v + \divv {\bf u} \, v-
\partial_t u_1  v \,d {\bf x}\,d t.
\ee
As we have seen in the proof of Theorem~\ref{thmDir}, $v \in L_2(I;H^{\frac12}(\partial\Omega))\cap H^{\frac14}(I;L_2(\partial\Omega))$ has a bounded extension
to a $v_1 \in L_2(I;H^1(\Omega)) \cap H_{00,\{0\}}^{\frac12}(I;L_2(\Omega))$. Equally well it has a bounded
extension
to a $v_2 \in L_2(I;H^1(\Omega)) \cap H_{00,\{T\}}^{\frac12}(I;L_2(\Omega))$.
Taking  a smooth $\chi\colon I \rightarrow [0,1]$ with $\chi \equiv 1$ in a neighborhood of $0$ and $\chi \equiv 0$ in a neighborhood of $T$, and $v_3(t,x):=\chi(t) v_1(t,x) + (1-\chi(t)) v_2(t,x)$, we obtain
a bounded extension
to a $v_3 \in L_2(I;H^1(\Omega)) \cap H_{00}^{\frac12}(I;L_2(\Omega))$, where $H_{00}^{\frac12}(I):=[L_2(I),H^1_0(I)]_{\frac12}$.
Given such an extension of $v \in L_2(I;H^{\frac12}(\partial\Omega))\cap H^{\frac14}(I;L_2(\partial\Omega))$, for ${\bf u} \in U$ the right-hand side of \eqref{3} can be bounded by
a multiple of 
$\|{\bf u}\|_{U}$ $\|v\|_{L_2(I;H^{\frac12}(\partial\Omega))\cap H^{\frac14}(I;L_2(\partial\Omega))}$, 
where the term $\int_{I \times \Omega} \partial_t u_1 v_3 d{\bf x}\,dt$ is bounded via interpolation as follows
\begin{align*}
&\Big| \int_{I\times \Omega} \partial_t u_1 v_3\, d{\bf x}\,dt \Big|
\lesssim \|u_1\|_{[L_2(I;L_2(\Omega)),H^1(I;L_2(\Omega))]_{\frac12}}  \|v_3\|_{[H_0^1(I;L_2(\Omega)),L_2(I;L_2(\Omega))]_{\frac12} }
\\
&\quad\eqsim \|u_1\|_{H^{\frac12}(I;L_2(\Omega))} \|v_3\|_{H_{00}^{\frac12}(I;L_2(\Omega))}
\lesssim \|{\bf u}\|_{U} \,\|v\|_{L_2(I;H^{\frac12}(\partial\Omega))\cap H^{\frac14}(I;L_2(\partial\Omega))}.
\end{align*}
By a standard mollification argument as in the original proof of Meyers--Serrin, one sees that the set of smooth ${\bf u}\in U$ is dense in $U$.
This yields that $G_N$ is bounded. 

Since in the current case of $\Gamma_N=\partial\Omega$, 
we have $\{{\bf u} \in U \colon {\bf u}|_{I \times \partial\Omega}\cdot {\bf n}=0\}=U_0$, 
knowing the result of Theorem~\ref{thm2}, Lemma~\ref{lem:bi} shows that the proof will be completed once we have shown that
\be \label{surjective2}
U \rightarrow \big(L_2(I;H^{\frac12}(\partial\Omega))\cap H^{\frac14}(I;L_2(\partial\Omega))\big)'
\colon {\bf u} \mapsto {\bf u}|_{I \times \partial\Omega} \cdot {\bf n} \text{ is surjective.}
\ee
In \cite[Corollary~3.17]{45.491}, it has been shown that for any $\psi \in \big(L_2(I;H^{\frac12}(\partial\Omega))\cap H^{\frac14}(I;L_2(\partial\Omega))\big)'$ there exists a 
$u_1 \in L_2(I;H^1(\Omega)) \cap H^{\frac12}_{00,\{0\}}(I;L_2(\Omega))$ with
$\partial_t u_1 - \Delta_{\bf x} u_1=0$ on $I \times \Omega$, and $(\nabla_{\bf x} u_1)|_{I \times \partial\Omega} \cdot {\bf n}_{\bf x}=-\psi$.
Taking ${\bf u}_2=-\nabla_{\bf x} u_1$, it means $\divv {\bf u}=0$ and ${\bf u}|_{I \times \partial\Omega}\cdot {\bf n}=\psi$, so that ${\bf u} \in U$ and \eqref{surjective2} is valid.
\end{proof}

Using Theorem~\ref{thmNeu}, we formulate the parabolic problem with inhomogeneous pure Neumann boundary conditions as a well-posed first-order system.
Let $(g_1,{\bf g}_2) \in L_2(I;L_2(\Omega))\times L_2(I;L_2(\Omega)^d)$ with ${\bf g}_2|_{I \times \partial\Omega} \cdot {\bf n}_{\bf x} \in  \big(L_2(I;H^{\frac12}(\partial\Omega))\cap H^{\frac14}(I;L_2(\partial\Omega))\big)'$, $u_0 \in L_2(\Omega)$, and $\phi \in (L_2(I;H^{\frac12}(\partial\Omega))\cap H^{\frac14}(I;L_2(\partial\Omega)))'$, and set  $g:= v \mapsto\int_{I \times \Omega} g_1 v +{\bf g}_2 \cdot \nabla_{\bf x} v \,d{\bf x}\,dt \in L_2(I;H^{-1}(\Omega))$.
Then, the solution ${\bf u}=(u_1,{\bf u}_2)$ $\in U$ of 
\begin{align}\label{eq:G_N_equation}
G_N{\bf u}=({\bf g}_2,g_1-{\bf b}\cdot {\bf A}^{-1}{\bf g}_2,u_0,{\bf g}_2|_{I \times \partial\Omega} \cdot {\bf n}_{\bf x} -\phi),
\end{align}
satisfies 
\begin{align*}
(Bu_1,\gamma_0 u_1,{{\bf A} \nabla_{\bf x} u_1|_{I \times \partial\Omega}} \cdot {\bf n}_{\bf x})=(g,u_0,\phi),
\end{align*}
i.e., $u_1$ satisfies the parabolic PDE in standard variational form and both the initial and Neumann boundary condition.
Indeed, knowing ${\bf u}_2+{\bf A} \nabla_{\bf x} u_1={\bf g}_2$, 
it holds that ${{\bf A} \nabla_{\bf x} u_1|_{I \times \partial\Omega}} \cdot {\bf n}_{\bf x}=({\bf g}_2-{\bf u}_2)|_{I \times \partial\Omega} \cdot {\bf n}_{\bf x}=\phi$, and 
the second 
equation in \eqref{eq:G_N_equation} is equivalent to 
$\int_{I \times \Omega} (\partial_t u_1+{\bf b}\cdot \nabla_{\bf x} u_1 +c u_1)v+{\bf A} \nabla_{\bf x} u_1\cdot \nabla_{\bf x}  v \,d{\bf x} \,d t=g(v)$ for all $v \in L_2(I;H^1_0(\Omega))$.

\newpage

\section{Plain convergence of adaptive algorithm for\\ homogeneous pure Dirichlet boundary conditions}

Consider the setting of Subsection~\ref{S2ndorder} with $\Gamma_D=\partial\Omega$ and homogeneous Dirichlet datum $u_D=0$.
Let $(f_1,{\bf f}_2) \in L_2(I;L_2(\Omega))\times L_2(I;L_2(\Omega)^d)$, $f:=v\mapsto\int_{I \times \Omega} f_1 v$ $+{\bf f}_2 \cdot \nabla_{\bf x} v \,d{\bf x}\,dt$ $\in L_2(I;H^{-1}(\Omega))$ and $u_0\in L_2(\Omega)$.
Since no Neumann boundary conditions are present, $g$ from \eqref{eq:RHS g} coincides with $f$. 
Then, with $u$ being the solution $u$ of \eqref{2nd}, Proposition~\ref{prop:Bu2least} states that ${\bf u}=(u,-{\bf A}\nabla_{\bf x} u)$ $\in U_0$  is the unique solution of
\begin{align*}
G{\bf u} = {\bf f},
\end{align*}
where
\begin{align*}
{\bf f}:=({\bf f}_2,  f_1 -{\bf b}\cdot {\bf A}^{-1} {\bf f}_2, u_0)\in L:=L_2(I\times\Omega)^{d}\times L_2(I\times\Omega)\times L_2(\Omega).
\end{align*}%

For an arbitrary discrete subspace $U_0^\delta \subset U_0$, the corresponding least-squares approximation ${\bf u}^\delta\in U_{0}^\delta$ of ${\bf u}$ is given by
\begin{align}\label{eq:argmin}
{\bf u}^\delta:=\argmin\limits_{{\bf v} \in U_{0}^\delta} \|{\bf f} - G{\bf v}\|_{L}^2. 
\end{align}
The resulting Euler--Lagrange equation reads as
\begin{align}\label{eq:Galerkin}
 \langle G {\bf u}^\delta,G {\bf v}\rangle_{L}=\langle {\bf f},G {\bf v}\rangle_{L} \quad \text{for all } {\bf v}\in U_0^\delta.
\end{align}
As $G$ is a linear isomorphism, the left-hand side defines an elliptic bilinear form and the Lax--Milgram lemma indeed guarantees unique solvability of \eqref{eq:argmin}--\eqref{eq:Galerkin}.

Throughout the remainder of this section, for $p \in \N$ some fixed polynomial degree  we consider discrete spaces of the form
\begin{align*}
 U_{0}^\delta :=
 S^p_0(\TT^\delta) \times S^p(\TT^\delta)^d
 \subset U_{0}
\end{align*}
for  conforming simplicial meshes $\TT^\delta$ of $I\times\Omega$, 
where 
\begin{align*}
S^p(\TT^\delta) &:= \{u \in C(I\times\Omega) :\, u|_K \text{ polynomial of degree }p \text{ for all }K\in\TT^\delta\},
\\
S^p_0(\TT^\delta) &:= \{u\in S^p(\TT^\delta) :\, u|_{I\times\partial\Omega}=0 \}.
\end{align*}
In particular, we consider such  meshes that can be created by newest vertex bisection (\cite{249.87}) starting from a given initial partition 
$\TT^0$.


Finally, we define the reliable and efficient {\sl a posteriori} error estimator 
\begin{align}\label{eq:estimator}
 \eta({\bf f},{\bf u}^\delta):=  
 \| {\bf f} - G {\bf u}^\delta \|_{L} \eqsim \| {\bf u} - {\bf u}^\delta\|_U,
\end{align}
with corresponding error indicators
\begin{align}\label{eq:indicators}
 \eta(K; {\bf f}, {\bf u}^\delta):=  \| {\bf f} - G {\bf u}^\delta \|_{L(K)} 
 \quad\text{for all } K\in\TT^\delta,
\end{align}
where
\begin{align*}
 L(\omega) := L_2(\omega)^{d}\times L_2(\omega) \times L_2(\partial_0 \omega)
 \quad\text{for all measurable }\omega\subseteq I\times\Omega.
\end{align*}
Here and throughout the remainder of this section, we use the notation $\partial_0\omega:=\partial\omega \cap (\{0\}\times \Omega)$.

We consider the following adaptive algorithm.
\begin{algorithm}
\label{alg:adaptive}
\textbf{Input:} 
Right-hand side ${\bf f}\in L$, initial mesh $\TT^0=\TT^{\delta_0}$, marking function $M:[0,\infty)\to[0,\infty)$ that is continuous at $0$ with $M(0)=0$.
\\
\textbf{Loop:} For each $\ell=0,1,2,\dots$, iterate the following steps {\rm(i)--(iv)}:
\begin{enumerate}[\rm(i)]
\item  Compute least-squares approximation ${\bf u}^\ell={\bf u}^{\delta_\ell}$ of ${\bf u}$. %
\item Compute  error indicators $\eta({K;{\bf f}, {\bf u}^\ell})$ for all elements ${K}\in\TT^\ell=\tria^{\delta_\ell}$. %
\item\label{item:marking}  Determine a set of marked elements $\mathcal{M}^\ell\subseteq\TT^\ell$ with the following marking property
\begin{align*}
 \max_{K\in\TT^\ell\setminus\mathcal{M}^\ell} \eta(K;{\bf f}, {\bf u}^\ell) \le M(\max_{K\in\mathcal{M}^\ell} \eta(K;{\bf f}, {\bf u}^\ell)).
\end{align*}
\item Generate refined conforming simplicial mesh $\TT^{\ell+1}$ by refining at least all marked elements $\mathcal{M}^\ell$ via newest vertex bisection. 
\end{enumerate}
\textbf{Output:} Refined meshes $\TT^\ell$, corresponding exact discrete solutions ${\bf u}^\ell$, and 
error estimators $\eta({\bf f}, {\bf u}^\ell)$ for all $\ell \in \N_0$.
\end{algorithm}

\begin{remark}\label{rem:marking}
The criterion~{\rm (iii)} is satisfied for standard marking strategies:

\begin{itemize}
\item Suppose that the D\"orfler criterion is used for fixed $0<\theta\le1$ , i.e., 
\begin{align*}
\theta\,\eta({\bf f},{\bf u}^\ell)^2 \le \sum_{K\in \mathcal{M}^\ell} \eta(K; {\bf f}, {\bf u}^\ell)^2.
\end{align*}
While this does not directly imply {\rm (iii)}, with the aim to realize optimal rates, 
the set $\mathcal{M}^\ell$ is constructed in practice via sorting of the indicators such that also
\begin{align*}
 \max_{K\in\TT^\ell\setminus\mathcal{M}^\ell} \eta(K;{\bf f}, {\bf u}^\ell) 
 \le \min_{K\in\mathcal{M}^\ell} \eta(K;{\bf f}, {\bf u}^\ell),
\end{align*}
see \cite{pp19}.
Then, {\rm (iii)} holds with $M(t):=t$.

\item Suppose the maximum criterion is used for fixed $0\le \theta\le1$, i.e.,
\begin{align*}
 \mathcal{M}^\ell:=\{K\in\TT^\ell:\eta(K;{\bf f}, {\bf u}^\ell) \ge (1-\theta) \max_{K'\in\TT^\ell} \eta(K';{\bf f}, {\bf u}^\ell)\}.
\end{align*}
Then, {\rm (iii)} holds with $M(t):=t$.
To see this, let $K\in\TT^\ell\setminus\mathcal{M}^\ell$ and note that
\begin{align*}
 \eta(K;{\bf f}, {\bf u}^\ell) < (1-\theta) \max_{K'\in\TT^\ell} \eta(K';{\bf f}, {\bf u}^\ell) \le \min_{K'\in\mathcal{M}^\ell} \eta(K';{\bf f}, {\bf u}^\ell). 
 \end{align*}

\end{itemize}
\end{remark}

The following theorem states convergence of Algorithm~\ref{alg:adaptive}. 
For the heat equation with ${\bf A}=\identity$, ${\bf b}=0$, and $c=0$, the performance of the algorithm has been numerically investigated in \cite{75.257}.

\begin{theorem}[Convergence for homogeneous (pure) Dirichlet]\label{thm:error_convergence}
There holds   plain convergence of the error 
\begin{align}
 \| {\bf u} - {\bf u}^\ell \|_U \to 0 \quad \text{ as } \ell\to \infty.
\end{align}
As the considered estimator~\eqref{eq:estimator} is equivalent to the error,  convergence to zero also transfers to the estimator.
\end{theorem}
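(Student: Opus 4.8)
The plan is to invoke the abstract framework of Siebert \cite{249.025}, which reduces the whole matter to proving that the sequence of discrete solutions converges to the exact solution. Since $G\in\Lis(U_0,L)$ by Theorem~\ref{thm2}, the form $\langle G\cdot,G\cdot\rangle_L$ induces on $U_0$ a norm equivalent to $\|\cdot\|_U$, and $G{\bf u}^\ell$ is precisely the $L$-orthogonal projection of ${\bf f}$ onto the closed subspace $G(U_0^{\delta_\ell})$. Because the meshes are nested, so are these subspaces, so the residuals $R^\ell:={\bf f}-G{\bf u}^\ell$ satisfy $\|R^{\ell+1}\|_L\le\|R^\ell\|_L$ and, by Pythagoras (using $R^{\ell+1}\perp(G{\bf u}^{\ell+1}-G{\bf u}^\ell)$), $\sum_\ell\|G{\bf u}^{\ell+1}-G{\bf u}^\ell\|_L^2<\infty$. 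Hence $({\bf u}^\ell)$ is Cauchy in $U_0$ and converges to the Galerkin solution ${\bf u}^\infty$ on $U_0^\infty:=\overline{\bigcup_\ell U_0^{\delta_\ell}}$, with $R^\ell\to R^\infty:={\bf f}-G{\bf u}^\infty$ in $L$. By the reliability in \eqref{eq:estimator} it then suffices to show $R^\infty=0$; and since $G$ maps $U_0$ onto all of $L$, this is equivalent to $R^\infty\perp G(U_0)=L$.

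Next I would show that the largest indicator vanishes, $\eta_\ell^{\max}:=\max_{K\in\TT^\ell}\eta(K;{\bf f},{\bf u}^\ell)\to0$. The geometric input is that newest vertex bisection \cite{249.87} from the fixed initial mesh $\TT^0$ produces, for every $\delta>0$, only finitely many simplices of diameter $\ge\delta$; as each marked element is bisected and thereby removed from all later meshes, each such simplex is marked at most once, whence $\max_{K\in\mathcal{M}^\ell}\diam(K)\to0$. Writing $\eta(K;{\bf f},{\bf u}^\ell)^2=\|R^\ell\|_{L(K)}^2\le 2\|R^\infty\|_{L(K)}^2+2\|R^\ell-R^\infty\|_L^2$, the second term tends to $0$ by the previous paragraph, while for marked $K$ the first tends to $0$ uniformly by absolute continuity of the $L_2$-integrals defining $L(K)$, since $|K|$ and the $d$-measure of $\partial_0K$ vanish. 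Thus $\mu_\ell:=\max_{K\in\mathcal{M}^\ell}\eta(K;{\bf f},{\bf u}^\ell)\to0$, and the marking property in step~(iii), together with continuity of $M$ at $0$, upgrades this to $\eta_\ell^{\max}\le\max\{\mu_\ell,M(\mu_\ell)\}\to0$.

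To conclude, I would decompose $I\times\Omega$ through the pointwise monotone limit of the mesh-size functions into the stable region $\Omega^+$ (elements eventually never refined) and the refining region $\Omega^0$ (mesh size $\to0$). On a stable element $K$ the indicator converges, $\eta(K;{\bf f},{\bf u}^\ell)\to\|R^\infty\|_{L(K)}$, so $\eta_\ell^{\max}\to0$ forces $\|R^\infty\|_{L(K)}=0$; hence $R^\infty=0$ on $\Omega^+$ and on its stable bottom faces. On $\Omega^0$ I would use the limiting Galerkin orthogonality $R^\infty\perp G(U_0^\infty)$: since the mesh size tends to $0$ there, any smooth field compactly supported in $\Omega^0$ is approximable in $U_0$ by elements of $\bigcup_\ell U_0^{\delta_\ell}$ and hence lies in $U_0^\infty$, so $R^\infty$ is tested to zero against $G$ of all such fields, which exhaust $L$ locally on $\Omega^0$ by the same local surjectivity of the first-order operator exploited in the proof of Theorem~\ref{thm2}; thus $R^\infty=0$ on $\Omega^0$ as well, and therefore everywhere. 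I expect the genuine difficulty to lie exactly in this last step: reconciling the product and conformity structure of $U_0^{\delta_\ell}=S^p_0(\TT^\ell)\times S^p(\TT^\ell)^d$ with the $H^1\times H(\divv)$ constraints while producing, on the refining region alone, discrete test fields rich enough to annihilate $R^\infty$ there — which is precisely the bookkeeping that Siebert's abstract framework \cite{249.025} is designed to package cleanly, and which makes the argument extend verbatim to the general least-squares setting.
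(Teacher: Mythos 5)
Your first two paragraphs are sound, and they reproduce essentially verbatim the internal mechanics of Siebert's framework: a priori convergence of ${\bf u}^\ell$ to some ${\bf u}^\infty$ by nestedness of $G(U_0^{\delta_\ell})$ and orthogonal projection in $L$, and the vanishing of the maximal indicator via the finiteness of NVB-generated elements above a given diameter, absolute continuity of the local $L$-norms, and the assumption on the marking function $M$. The paper does not redo these steps; it only verifies the hypotheses of \cite{249.025} and invokes the abstract convergence theorem. The substance of the paper's proof is therefore concentrated exactly where your argument breaks down.

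The gap is in your third paragraph. To conclude $R^\infty=0$ on the refined region $\Omega^0$ you invoke ``local surjectivity'' of $G$: you test $R^\infty$ against $G{\bf v}$ for smooth ${\bf v}$ compactly supported in $\Omega^0$ and claim these images exhaust $L$ locally there. This is not justified, and is false as stated: surjectivity of $G$ (Theorem~\ref{thm2}) is a global statement obtained by solving a parabolic problem on all of $I\times\Omega$; given $({\bf q},h)$ supported in $\omega\subseteq\Omega^0$ there is in general no ${\bf v}$ supported in $\omega$ with $G{\bf v}=({\bf q},h,0)$, since setting ${\bf v}_2={\bf q}-{\bf A}\nabla_{\bf x}v_1$ and requiring the second component of $G{\bf v}$ to equal $h$ forces $v_1$ to solve a parabolic equation whose solution does not have compact support. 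Any test field that is \emph{not} compactly supported in $\Omega^0$ must instead be taken from $U_0^\infty$, whose restriction to the never-refined region is only the fixed finite-dimensional trace of $\bigcup_\ell U_0^{\delta_\ell}$ there; this coupling between the two regions is precisely what your decomposition cannot handle. The framework of \cite{249.025} resolves the refined region differently: one tests $R^\ell$ against $G({\bf w}-\Pi^\ell{\bf w})$ for a smooth approximation ${\bf w}$ of the error, localizes via strong reliability~\eqref{eq:strong_reliability}, and kills the contribution of $\Omega^0$ by a \emph{local approximation property} $\|{\bf w}-\Pi^\ell{\bf w}\|_{U(K)}\lesssim |K|^{q/(d+1)}\|{\bf w}\|_{W(K)}$. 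Verifying this property --- choosing the dense subspace $W=H^k(I\times\Omega)\times H^k(I\times\Omega)^d$ with $k>\tfrac{d+1}{2}$, using the pointwise interpolant, and proving the local estimate also when $p+1\le (d+1)/2$ via a two-stage Bramble--Hilbert argument --- is the paper's Step~2 and the actual content of its proof, together with the additivity and absolute continuity of the local semi-norms (made possible by the removal of the dual norm in Proposition~\ref{equiv}) and the local stability~\eqref{eq:G locally stable}. None of this appears in your proposal, so the argument as written does not close.
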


\begin{proof}
It suffices to verify that the considered problem fits into the abstract framework of \cite{249.025}, which gives sufficient conditions for error convergence. 
This will be done in the following three steps.

{\bf Step~1:}
Define another equivalent norm on $U$
\begin{align*}
 \| {\bf v} \|_{U(I\times\Omega)}^2 := \|v_1\|_{L_2(I;H^1(\Omega))}^2+\|{\bf v}_2\|_{L_2(I;L_2(\Omega)^d)}^2+\|\divv {\bf v}\|_{L_2(I\times\Omega)}^2 + \| v_1(0,\cdot) \|_{L_2(\Omega)}^2
\end{align*}
for all ${\bf v}=(v_1,{\bf v}_2)\in U$.
Moreover, define the following semi-norms for all measurable subsets $\omega\subseteq I\times\Omega$
\begin{align*}
 \| {\bf v} \|_{U(\omega)}^2 := \|v_1\|_{L_2(\omega)}^2 +  \|\nabla_{\bf x} v_1\|_{L_2(\omega)}^2 +\|{\bf v}_2\|_{L_2(\omega)}^2+\|\divv {\bf v}\|_{L_2(\omega)}^2 + \| v_1|_{\partial_0\omega} \|_{L_2(\partial_0\omega)}^2. 
\end{align*}
The additional term $\| v_1|_{\partial_0\omega} \|_{L_2(\partial_0\omega)}^2$ will  be required to prove local stability~\eqref{eq:G locally stable}.
The semi-norms are \emph{additive} as well as \emph{absolutely continuous} in the sense of \cite[Section~2.1]{249.025}, i.e.,
\begin{align}\label{eq:subadditive}
\begin{split}
 \| {\bf v} \|_{U(\omega_1\cup\omega_2)}^2 =  \| {\bf v} \|_{U(\omega_1)}^2 +  \| {\bf v} \|_{U(\omega_2)}^2
 \quad\text{for all } {\bf v} \in U, 
 \omega_1,\omega_2\subseteq {I\times \Omega}\\
 \text{ with }\omega_1\cap\omega_2=\emptyset;
 \end{split}
\end{align}
as well as 
\begin{align}\label{eq:absolutely_continuous}
 \lim_{|\omega|\to0}\| {\bf v} \|_{U(\omega)}^2 = 0\quad\text{for all }{\bf v}\in U.
\end{align}

\begin{remark}
For this proof step it was essential that we got rid of the dual norm in Proposition~\ref{equiv}, see also Remark~\ref{litt}.
\end{remark}

{\bf Step~2:} 
We next show a \emph{local approximation property} in the sense of \cite[Section~2.2.2]{249.025}, i.e., 
existence of a dense subspace $W\subseteq U_0$ equipped with additive semi-norms $\| \cdot \|_{W(\omega)}$, $\omega\subseteq I\times\Omega$, such that $\| \cdot \|_{W({I\times\Omega})}=\| \cdot \|_{W}$, and a corresponding $\Pi^\delta \in \cL(W,U_0^\delta)$
with 
\begin{align}\label{eq:local_approximation}
 \| {\bf v} - \Pi^\delta {\bf v} \|_{U(K)} \lesssim |K|^{\frac{q}{d+1}} \| {\bf v} \|_{W(K)}
 \quad\text{for all }{\bf v}\in W, K\in\TT^\delta,
\end{align}
where $q>0$ is some fixed exponent. For  $k:=\min\{k' \in \N\colon k' \geq p+1,k' >\frac{d+1}{2}\}$, let 
\begin{align*}
 W:=\{{\bf v}=(v_1,{\bf v}_2) \in H^k(I\times\Omega)\times H^k(I\times\Omega)^{d}:\, v_1|_{I\times\partial\Omega}=0 \}\subset U_0,
\end{align*}
and let
$I^\delta \in \cL(H^k (I\times\Omega),S^p(\TT^\delta))$ be the standard point-wise interpolation operator, which is well-defined because of $k >\frac{d+1}{2}$.
Then, the operator $\Pi^\delta:={\bf I}^\delta_{d+1}:=(I^\delta,\dots, I^\delta)$ (of length $d+1$) is in $\cL(W,U_0^\delta)$, and with ${\bf I}^\delta_d$ defined analogously, it holds that
\begin{align*}
  &\| {\bf v} - \Pi^\delta {\bf v} \|_{U(K)}^2 
  = \|v_1 - I^\delta v_1\|_{L_2(K)}^2 
  +  \|\nabla_{\bf x} (v_1-I^\delta v_1)\|_{L_2(K)}^2 
  + \| (v_1-I^\delta v_1)|_{\partial_0 K}\|_{L_2(\partial_0 K)}^2
  \\
  &\qquad+\|{\bf v}_2- {\bf I}^\delta_{d}{\bf v}_2\|_{L_2(K)}^2
  +\|\divv ({\bf v}- {\bf I}^\delta_{d+1}{\bf v})\|_{L_2(K)}^2 
  \\
  &\quad\lesssim \|(v_1-I^\delta v_1)|_{\partial_0 K}\|_{L_2(\partial_0 K)}^2
  + \|v_1 - I^\delta v_1\|_{H^1(K)}^2 
  + \|{\bf v}_2- {\bf I}^\delta_{d}{\bf v}_2\|_{H^1(K)}^2.
\end{align*}
A standard trace inequality~\cite[Equation~(10.3.8)]{35.7} further shows that 
\begin{align*}
 &\| (v_1-I^\delta v_1)|_{\partial_0 K}\|_{L_2(\partial_0 K)}^2 
 \le 
 \| (v_1-I^\delta v_1)|_{\partial K}\|_{L_2(\partial K)}^2
 \\
 &\quad\lesssim |K|^{-\frac{1}{d+1}} \|v_1 - I^\delta v_1\|_{L_2(K)}^2 
 + |K|^{\frac{1}{d+1}} |v_1 - I^\delta v_1|_{H^1(K)}^2. 
\end{align*}

To finish the proof, we show for $m \in \{0,1\}$ and $v\in H^k(I\times\Omega)$ that
\begin{align*}
 \|v -I^\delta v \|_{H^m(K)} \lesssim |K|^{\frac{p+1-m}{d+1}} \|v\|_{H^k(K)}.
\end{align*}
While this is standard if $p+1>(d+1)/2$, i.e., $k=p+1$, it is not evident if $p+1\le(d+1)/2$, and we thus provide a short proof.
We first assume that $K$ is the reference simplex, i.e., the convex hull of the canonical basis vectors in $\R^{d+1}$.
Let $\widetilde v\in P^{k-1}$ be the best approximation of $v$ with respect to $\| \cdot \|_{H^k}$ in the space of polynomials of degree $k-1$, 
and let $\widehat v\in P^{p}$ be the best approximation of $\widetilde v$ with respect to $\| \cdot \|_{H^k}$ in the space of polynomials of degree $p$.
The projection property as well as continuity of $I^\delta$ on $H^k(K)$ show that 
\begin{align*}
 \|v -I^\delta v \|_{H^m(K)} &= \|(\identity -I^\delta) (v- \widehat v) \|_{H^m(K)}
 \\
 &\lesssim \| v- \widehat v \|_{H^k(K)}
 \le \| v - \widetilde v \|_{H^k(K)} + \| \widetilde v - \widehat v \|_{H^k(K)}.
\end{align*}
Equivalence of norms on finite-dimensional spaces and two applications of the Bramble--Hilbert lemma further yield that
\begin{align*}
 &\| v - \widetilde v \|_{H^k(K)} + \| \widetilde v - \widehat v \|_{H^k(K)}
 \eqsim \| v - \widetilde v \|_{H^k(K)} + \| \widetilde v - \widehat v \|_{H^{p+1}(K)}
 \\
 &\qquad\lesssim \| v - \widetilde v \|_{H^k(K)} + | \widetilde v |_{H^{p+1}(K)}
 \le \| v - \widetilde v \|_{H^k(K)} + | v - \widetilde v |_{H^{p+1}(K)} + | v |_{H^{p+1}(K)}
 \\
 &\qquad \le 2\| v - \widetilde v \|_{H^k(K)} + | v |_{H^{p+1}(K)} \lesssim | v |_{H^k(K)} + | v |_{H^{p+1}(K)}.
\end{align*}
If $K$ is arbitrary, the fact that we use newest vertex bisection allows to apply a standard scaling argument, which yields that
\begin{align*}
 \|v -I^\delta v \|_{H^m(K)} 
 \lesssim  |K|^{\frac{k-m}{d+1}} |v|_{H^k(K)} + |K|^{\frac{p+1-m}{d+1}} |v|_{H^{p+1}(K)}
 \lesssim
 |K|^{\frac{p+1-m}{d+1}} \| v \|_{H^k(K)}.
\end{align*}
Overall, we thus conclude \eqref{eq:local_approximation} with $q=p$.

{\bf Step~3:}
With the patch $\omega^\delta(K):=\bigcup\{K'\in\TT^\delta:\, K\cap K'\neq\emptyset \}$ of an element $K\in\TT^\delta$, we finally show that the employed error estimator is \emph{locally stable} as in \cite[Section~2.2.3]{249.025}, i.e.,
\begin{align}\label{eq:local_stability}
 \eta(K;{\bf f}, {\bf u}^\delta) \lesssim \|{\bf u}^\delta \|_{U(\omega^\delta(K))} + \| D \|_{\widetilde W(\omega^\delta(K))} 	
 \quad\text{for all }K\in\TT^\delta
\end{align}
for a suitable $D$ depending only on the data in a normed space $\widetilde W$ equipped with additive and absolutely continuous semi-norms $\| \cdot \|_{\widetilde W(\omega)}$, $\omega\subseteq I\times\Omega$, such that $\| \cdot \|_{\widetilde W(I\times\Omega)}=\| \cdot \|_{\widetilde W}$; 
as well as \emph{strongly reliable} as in \cite[Section~2.2.3]{249.025}
\begin{align}\label{eq:strong_reliability}
\begin{split}
 \langle {\bf f} - G{\bf u}^\delta,  G {\bf v} \rangle_{L}
 \lesssim \sum_{K\in\TT^\delta} \eta(K;{\bf f}, {\bf u}^\delta) \| {\bf v} \|_{U(\omega^\delta(K))}
 \quad\text{for all }{\bf v}\in U_0.
 \end{split}
\end{align}
\begin{remark}Actually, \cite{249.025} assumes that $\widetilde W=L_2(I\times\Omega)$.
It is, however, straightforward to see that our mildly relaxed assumption is already sufficient for the convergence proof.
Indeed, local stability is only employed in the elementary \cite[Lemma~3.5]{249.025}.
\end{remark}
Local stability~\eqref{eq:local_stability} follows from the triangle inequality
\begin{align*}
 \eta(K) = \eta(K;{\bf f}, {\bf u}^\delta) = \| {\bf f} - G {\bf u}^\delta \|_{L(K)}
 \le \| {\bf f} \|_{L(K)} + \| G {\bf u}^\delta \|_{L(K)}
\end{align*}
and the following local stability of $G$
\begin{align}\label{eq:G locally stable}
\begin{split}
  \| G {\bf u}^\delta \|_{L(K)}^2
  &\lesssim \| {\bf u}_2^\delta \|_{L_2(K)}^2 
 + \| \nabla_{\bf x} u_1^\delta \|_{L_2(K)}^2 
 + \|\divv {\bf u}^\delta \|_{L_2(K)}^2   
 + \|  u_1^\delta \|_{L_2(K)}^2 
 \\
 &\quad+ \| u_1^\delta(0,\cdot) \|_{L_2(\partial_0 K)}^2
 =  \| {\bf u}^\delta \|_{U(K)}^2.
 \end{split}
\end{align}
Strong reliability~\eqref{eq:strong_reliability} follows from the Cauchy--Schwarz inequality together with the previous local stability of $G$
\begin{align*}
  \langle {\bf f} - G{\bf u}^\delta,  G {\bf v} \rangle_{L}
  \le \sum_{K\in\TT^\delta} \eta(K;{\bf f},{\bf u}^\delta) \, \| G {\bf v}\|_{L(K)} 
  \lesssim \sum_{K\in\TT^\delta} \eta(K;{\bf f}, {\bf u}^\delta) \| {\bf v} \|_{U(K)},
\end{align*}
which concludes the proof.
\end{proof}

\begin{remark}
Together with the C\'ea lemma and with $h^\delta_{\rm max}:=\max\{|K|^{1/(d+1)}: K\in\TT^\delta\}$,  Step~2 from the proof particularly yields the {\sl a priori} estimate
\begin{align*}
 \|{\bf u} - {\bf u}^\delta\|_U \lesssim \inf_{{\bf v}\in U_{0}^\delta} \| {\bf u} - {\bf v}\|_U 
 \le   \| {\bf u} - \Pi^\delta{\bf u}\|_U 
 \lesssim  (h_{\rm max}^\delta)^p \| {\bf u} \|_{H^k(I\times\Omega)\times H^k(I\times\Omega)^d}
\end{align*}
whenever the solution ${\bf u}$ satisfies the additional regularity ${\bf u} \in H^k(I\times\Omega)\times H^k(I\times\Omega)^{d}$, 
where $k=\min\{k' \in \N\colon k' \geq p+1,\,k' >\frac{d+1}{2}\}$.
Instead of the standard interpolation operator $I^\delta$, one can also consider the Scott--Zhang operator $\widetilde I^\delta$ from \cite{247.2} which preserves homogeneous Dirichlet boundary conditions.
Then, \cite[Equation~(4.3)]{247.2} gives an alternative local bound for the resulting operator $\widetilde \Pi^\delta$
\begin{align*}
 \| {\bf v} - \widetilde\Pi^\delta {\bf v} \|_{U(K)} \lesssim  |K|^{\frac{p}{d+1}} \| {\bf v} \|_{H^{p+1}(\omega^\delta(K))\times H^{p+1}(\omega^\delta(K))^{d}} 
\end{align*}
for all ${\bf v} \in H^{p+1}(I\times\Omega)\times H^{p+1}(I\times\Omega)^{d}$ with ${\bf v}|_{I\times\partial\Omega} = 0$ and all $K\in\TT^\delta$.
In particular this yields the {\sl a priori} estimate
\begin{align}
 \|{\bf u} - {\bf u}^\delta\|_U 
 \lesssim  (h_{\rm max}^\delta)^p \| {\bf u} \|_{H^{p+1}(I\times\Omega)\times H^{p+1}(I\times\Omega)^d}
\end{align}
under the milder assumption that ${\bf u} \in H^{p+1}(I\times\Omega)\times H^{p+1}(I\times\Omega)^{d}$.
We mention that \cite[Theorem~14]{75.257} already proved the latter inequality in the lowest-order case $p=1$ under even weaker assumptions on ${\bf u}$.
However, their proof is restricted to simplicial meshes that directly result from a tensor-product mesh~\cite[Section~4.1.2]{75.257}.
\end{remark}

\begin{remark}\label{rem:general ls}
{\rm (a)} We stress that the proof of Theorem~\ref{thm:error_convergence} is relatively abstract in the sense that it  generalizes to a large class of least-squares formulations:
Suppose that $U$ (instead of $U_0$) and $L$ are arbitrary Hilbert spaces.
Consider the equation
\begin{align*}
 G u = f \quad\text{for given } G\in\Lis(U,L) \text{ and } f\in L.
\end{align*}
Moreover, suppose that $U$ as well as $L$ are equipped with additive and absolutely continuous (see \eqref{eq:subadditive}--\eqref{eq:absolutely_continuous}) semi-norms $\|\cdot\|_{U(\omega)}$, $\|\cdot\|_{L(\omega)}$ for all measurable subsets $\omega$ of some set $\Omega\subseteq\R^n$ 
being the union of an initial conforming simplicial mesh $\TT^0$.
To any conforming simplicial mesh $\TT^\delta$ of $\Omega$, we associate a finite-dimensional subspace $U^\delta\subseteq U$ such that $U^\delta\subseteq U^{\widehat\delta}$ for all refinements $\TT^{\widehat\delta}$ of $\TT^\delta$.
We define the least-squares approximation $u^\delta$ as in \eqref{eq:argmin}--\eqref{eq:Galerkin} 
and the error estimator $\eta(f,u^\delta)$ with indicators $\eta(K;f,u^\delta)$ as in \eqref{eq:estimator}--\eqref{eq:indicators}.
In this setting, Algorithm~\ref{alg:adaptive} can be applied. 
Then, the (analogous) local approximation property of Step~2 (where one could also allow for $W((\omega^\delta)^m(K))$ for fixed $m\in\N$ instead of $W(K)$ in~\eqref{eq:local_approximation}) and local stability of $G$ as in~\eqref{eq:G locally stable} (where again $U(K)$ could be replaced by $U((\omega^\delta)^{m}(K))$) yield error and estimator convergence
\begin{align}\label{eq:abstract convergence}
 \|u - u^\ell\|_{U} \eqsim \eta(f,u^\ell) \to 0 \quad \text{as }\ell\to\infty.
\end{align}
Independently, it has also been recently observed in \cite{fp20} that the given abstract assumptions yield \eqref{eq:abstract convergence} for least-squares methods. 
However, we stress that Theorem~\ref{thm:error_convergence} is not available in \cite{fp20}. 

{\rm (b)}
The setting of {\rm (a)} is for instance satisfied for a  standard least-squares formulation of the Poisson model problem~\cite[page~56]{23.5}, 
the Helmholtz problem \cite{35.93005}, the linear elasticity problem \cite{cks05}, and the Stokes problem \cite{clw04}, 
see also \cite[Chapter~3]{storn19} for a brief overview of all these formulations.
The involved spaces $H^1(\Omega)$ and $H({\rm div};\Omega)$ 
can be discretized by usual  finite element spaces, i.e., continuous piecewise polynomials and Raviart--Thomas functions, 
respectively.
The required  corresponding approximation properties~\eqref{eq:local_approximation} are well-known,
 see, e.g., \cite[Section~1.5]{eg04}. 
 

Only for the Stokes problem~ \cite{clw04}, 
one requires a special interpolation operator on (a dense subspace of) $\{{\bf v}\in H({\rm div};\Omega)^d: \int_\Omega {\rm tr}({\bf v})\,d{\bf x}=0 \}$, where ${\rm tr}$ denotes the trace of square matrices.
Since $S^1(\TT^\delta)^d$ is contained in the  Raviart--Thomas space of order $\ge1$ (excluding the lowest-order case), 
such an operator can be defined component-wise as an integral-preserving  $J^\delta$ $\in$  $\cL(H^2(\Omega),S^1(\TT^\delta))$ with a local approximation property, i.e.,  $\int_\Omega v \,d {\bf x} = \int_\Omega J^\delta v \,d {\bf x}$ and 
\begin{align}\label{eq:new interpolation}
\|v -J^\delta v \|_{H^1(K)} \lesssim |K|^{\frac{1}{d}} \|v\|_{H^2((\omega^\delta)^m(K))}
\end{align}
 for all $v\in H^2(\Omega)$, $K\in\TT^\delta$, and some fixed $m\in\N_0$.  The operator  $J^\delta$ is for instance constructed as follows: 
Inspired by \cite[Section~4.1]{249.97} and given the nodal Lagrange basis $\{\phi_i: i\in\{1,\dots,N\}\}$ with corresponding local dual basis $\{\psi_i: i\in\{1,\dots,N\}\}$ as in \cite{247.2}, one first defines  
\begin{align*}
\widetilde \psi_i:=\frac{\phi_i+\int_\Omega (1-\phi_i)\phi_i\,d{\bf x} \,\psi_i-\sum_{j \neq i}\big( \int_\Omega\phi_i \phi_j\,d{\bf x}\, \psi_j\big)}{\int_\Omega \phi_i\, d{\bf x}}
\end{align*}
for all $i\in\{1,\dots,N\}$. 
This provides a second local dual basis in the sense that $\supp(\widetilde\psi_i)\subset\supp(\phi_i)$ and $\int_{\supp(\widetilde\psi_j)} \phi_i\widetilde\psi_j\,d{\bf x} = \delta_{ij}$ for all $i,j\in\{1,\dots,N\}$.
Moreover, from $\sum_{i} \phi_i=\1$, one verifies that $\sum_i (\int_\Omega \phi_i d{\bf x}) \widetilde\psi_i=\1$ meaning that this dual basis has (lowest-order) approximation properties. 
Defining \begin{align*}J^\delta: H^1(\Omega)\to S^1(\TT^\delta),\quad v\mapsto \sum_{i=1}^N \int_{\supp \widetilde \psi_i} v \widetilde \psi_i\,d{\bf x} \,\phi_i,\end{align*}
the latter property implies that this biorthogonal projector
is integral-preserving, and the desired approximation property~\eqref{eq:new interpolation} with $m=2$ follows as in \cite{247.2}.

Moreover, \cite{fp20} verifies the setting of {\rm (a)}  for another least-squares formulation of the Stokes problem as well as the Maxwell problem.

{\rm (c)} Optimal convergence of adaptive least-square finite element methods driven by an equivalent weighted error estimator has been already proved 
for the Poisson problem 
in \cite{35.93557,carstensen20}, 
the linear elasticity problem~\cite{bcs18}, and the Stokes problem~\cite{bc17}.
However, apart from the very recent and independent work~\cite{fp20}, convergence for adaptive algorithms driven by the natural estimator is only known for the Poisson problem if  D\"orfler marking with a sufficiently large bulk parameter is used, see  \cite{cpb17}, where $Q$-linear convergence has been demonstrated. 
\end{remark}

\bibliographystyle{alpha}
\bibliography{../ref}
\end{document}